\def\R#1{\mathbb{R}^{#1}}
\begin{document}

\title{Dynamical behaviour of SIR model with coinfection: the case of finite carrying capacity\protect\thanks{The paper is supported by the Swedish Research Council, grant \#.}}

\author[1]{Samia Ghersheen}

\author[1]{Vladimir Kozlov}

\author[1]{Vladimir G. Tkachev*}

\author[2]{Uno Wennergren}

\authormark{Samia Ghersheen, Vladimir Kozlov, Vladimir Tkachev  Uno Wennergren}

\address[1]{\orgdiv{Department of Mathematics}, \orgname{Link\"oping University}, \orgaddress{\state{Link\"oping}, \country{Sweden}}}

\address[2]{\orgdiv{Department of Physics, Chemistry, and Biology}, \orgname{Link\"oping University}, \orgaddress{\state{Link\"oping}, \country{Sweden}}}

\corres{*\email{vladimir.tkatjev@liu.se}}


\abstract[Summary]{ Multiple viruses are widely studied because of their negative effect on the health of host as well as on whole population. The dynamics of coinfection is important in this case.
 We formulated a SIR model that describes the coinfection of the two viral strains in a single host population with an addition of limited growth of susceptible in terms of carrying capacity. The model describes four classes of a population: susceptible, infected by first virus, infected by second virus, infected by both viruses and completely immune class. We proved that for any set of parameter values there exist a globally stable equilibrium point. This guarantees that the disease always persists in the population with a deeper connection between the intensity of infection and carrying capacity of population. Increase in resources in terms of carrying capacity promotes the risk of infection which may lead to destabilization of the population.}

\keywords{SIR model, coinfection, carrying capacity, global stability, linear complementarity problem}
\jnlcitation{\cname{%
\author{Ghersheen S.},
\author{V. Kozlov},
\author{V. Tkachev},
and
\author{U. Wennergren}} (\cyear{2018}),
\ctitle{Dynamical behaviour of SIR model with coinfection}, \cjournal{Math Meth Appl Sci.}, \cvol{}.}

\maketitle


\section{Introduction}
Coinfection with multiple strains in a single host is very common. Viral diseases such as AIDS/ HIV, Dengue fever, Hepatitis B and C are the great threats to human lives. Multiple strains of these viruses made the disease more sever and complicated to control. Sometimes coinfection may occur with multiple disease in one host such as HIV and Hepatitis B \cite{chun,Kang}, HIV and Hepatitis C \cite{Gupta}, Malaria and HIV \cite{Alemu}, DENV and ZIKV \cite{Chaves}, ZIKV and CHIKV \cite{Zambrano}.

Mathematical modelling of infectious diseases is an efficient tool for studying the dynamics of various virulent diseases which benefits to develop the appropriate strategies to control possible outbreaks of diseases. One of the most significant aspect of studying multi-strain epidemic models is to identify those conditions which lead to the coexistence of different strains.  The dynamics of coinfection is important in this case, because in case of co-infection treatment against one strain may agitate the other \cite{Gao}.

Many mathematical studies exist on interaction of multiple strains such as dengue virus \cite{Ferguson,Kawaguchi}, Influenza \cite{Sharp1}, human papilloma virus \cite{Chaturvedi} and multiple disease such as HIV/malaria \cite{Mukandavire}, HIV/pneumonia \cite{Nthiiri,Abu1},  Malaria/Cholera \cite{Okosun}.  Allen et al. \cite{Allen} studied a SI model with density dependent mortality and coinfection in a single host where one strain is vertically and the other is horizontally transmitted. The model has application on hantavirus and arenavirus. An ODEs model of co-infection was designed by Zhang et al \cite{Zhang} to study two parasite strains on two different hosts to know the sustainability and proliferation of these strains in response to variability in mode of action of parasites and its host types.  Bichara et al \cite{ Bichara} proposed  SIS, SIR and MSIR models with variable population, and n different pathogen strains to study that under generic conditions a competitive exclusion principle holds. A two disease model was also used by Martcheva and Pilyugin \cite{Martcheva} to study dynamics of dual infection by considering time of infection of primary disease.

Castillo et al \cite{ Castillo} analysed a SIS model on sexual transmitted disease by two hostile strains. Females with different susceptibility level to any of the virulent strain were separated into two groups. Stability analysis was performed to identify conditions for the co-existence and competitive exclusion of the two strains.  Gao et al. \cite{Gao} studies a SIS model with dual infection. Simultaneous transmission of infection and no immunity has been considered. The study revealed that the coexistence of multiple agents caused co-infection and made the disease dynamics more complicated. It was observed that coexistence of two disease can only occur in the presence of coinfection. In above models they considered that the number of births per unit time is constant.

In \cite{Sharp2} Sharp et al proposed a model for chronic wasting disease with density dependence to study the effect of density dependence and time delay on wildlife population and observed that more frequent  outbreaks of disease are caused by increased carrying capacity which leads to the disruption of a deer population. In contrast to the previous studies, we formulate a SIR model with coinfection and limited growth of susceptible population to study the effects of carrying capacity on disease dynamics. We  also carried out global stability analysis using a generalized Volterra function for each stable point to study the complete dynamics of disease. The model was formulated and some of our results were recently announced in \cite{SKTW18a}. We analyse the model with the possibility of transmission of two strains simultaneously. However, contrary to [11], to diminish the complexity of model and to study the global behaviour of the system, the reduction of the system is needed to some sense. So we assume that there is no interaction between single strains, since the co infected class is always the largest class. Our model also includes the fact that coinfection can occur as result of interaction between co infected class and single infected class and co infected class and susceptible class.
We analyse a SIR model with no cross immunity. In Sections \ref{equipoint} and \ref{sec8} we characterize all stable equilibrium points and give the results regarding global stability of all equilibrium points. In section \ref{H.C.C} we analyse the effect of carrying capacity on disease dynamics.

\section{Formulation of the model}
We consider a SIR model with the recovery of each class and assume that infected and recovered populations can not reproduce. A susceptible individual can be infected with both stains as a result of contact with co infected person. The disease induced death rate is ignored. We also assume that the co-infection can occur as a result of contact with the co-infected class. This process is illustrated in Fig.~\ref{flowdiag}.
\begin{figure}[h]
\begin{center}
\begin{tikzpicture}[every node/.style={ minimum height={1cm},minimum width={1cm},thick,align=center}]
\node[draw] (S) {S};
\node[draw, above right=of S] (I1) {$I_1$};
\node[draw, below=of I1] (I12) {$I_{12}$};
\node[draw, below=of I12] (I2) {$I_2$};
\node[draw, right= of I12] (Rec) {$R$};
\draw[->] (S) -- (I1); 
\draw[->] (S) -- (I12) ;
\draw[->,] (S) -- (I2);
\draw[->,] (I2) -- (I12);
\draw[->,] (I1) -- (I12);
\draw[->] (I1) -- (Rec);
\draw[->] (I2) -- (Rec);
\draw[->,] (I12) -- (Rec);
\node[left =of I1] at (2.5,1.3) {$\alpha_1$};
\node[left =of I2] at (2.5,-1.3) {$\alpha_2$};
\node[left =of I12]at (2.5,0.25) {$\alpha_3$};
\node[left =of I1] at (3.3,0.8) {$\eta_1$};
\node[left =of I1] at (3.3,-0.8) {$\eta_2$};
\node[left =of I1] at (4.5,0.25) {$\sigma_3$};
\node[left =of I1] at (4.5,1.3) {$\sigma_1$};
\node[left =of I1] at (4.5,-1.3) {$\sigma_2$};
\end{tikzpicture}
\end{center}
\vspace*{-0.5cm}
\caption{ Flow diagram for two strains coinfection model.}
\label{flowdiag}
\end{figure}
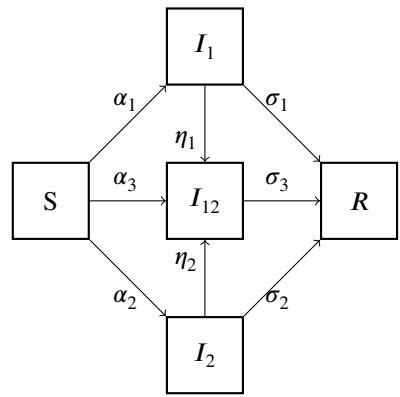

\noindent
The corresponding SIR model is then described by the ODE system as follows:
\begin{equation}\label{model}
\begin{split}
S'\,\, &=(b(1-\frac{S}{K})-\alpha_1I_1-\alpha_2I_2-\alpha_3I_{12}-\mu_0)S,\\
I_1'\,\, &=(\alpha_1S - \eta_1I_{12} - \mu_1)I_1,\\
I_2'\,\, &=(\alpha_2S - \eta_2I_{12}- \mu_2)I_2,\\
I_{12}' &=(\alpha_3S+ \eta_1I_1+\eta_2I_2-\mu_3)I_{12}, \\
R'\,\, &=\rho_1 I_1+\rho_2I_2+\rho_3 I_{12}-\mu_4' R.
\end{split}
\end{equation}
Here and in what follows we use the following notation:
\begin{itemize}
\item
$S$ represents the susceptible class,
\item
$I_1$ and $I_2$ represent infected classes from strain 1 and strain 2 respectively,
\item
$I_{12} $ represents co-infected class,
\item
$R$ represents the recovered class,
\item
$b$ is  the birthrate in the population,
\item
$K $ is  a carrying capacity,
\item
$\rho_i$ is the recovery rate from infected class $i$,
\item
$\mu_i $ is the reduced death rate of  class $i$,
\item
$\alpha_i$ is the transmission rate of strain $i$ (including the case of coinfection),
\item
$\eta_i$ is rate at which infected from one strain getting  infection from co-infected class $i$.
\end{itemize}

Let us make some natural comments about the present model.  First we suppose (and it also follows from \eqref{model})  that there is  no interaction between strain $1$ and strain $2$. Also, note that it is reasonable to assume that the death rate  of the susceptible class is less or equal than the corresponding reproduction rate because otherwise population will die out quickly. Therefore we assume always that
\begin{equation}\label{birthrel}
b-\mu_0>0.
\end{equation}

Furthermore, the system is considered under the natural initial conditions
\begin{equation}\label{initdata}
 S(0)>0,\quad I_1(0)>0, \quad I_2(0)>0,\quad  I_{12}(0)>0.
 \end{equation}
Indeed, it follows from the general theory of \eqref{model} that 1) any integral curve with \eqref{initdata} is staying in the non negative cone for all $t\ge0$, and, moreover, 2) if $S(0)=0$ or  $I_\alpha(0)=0$ for some index $\alpha$ then the corresponding coordinate will vanish for all $t\ge 0$.

Finally, note also that since the variable $R$ is not presented in the first four equations, we may consider only the first four equations of system \eqref{model}. Then $R(t)$ can be easily found by integrating the last (linear in $R$) equation in \eqref{model}.

To make a rigorous mathematical analysis of \eqref{model} it is convenient to keep the following unifying notation:
$$
S=Y_0,\quad
I_1=Y_{1}, \quad
I_{2}=Y_{2},\quad
I_{12}=Y_{3}.
$$
Then the first four equations of \eqref{model} can be rewritten in a compact Lotka-Volterra type form:
 \begin{equation}\label{gen}
 \begin{split}
 \frac{dY_k}{dt}&=F_{k}(Y)\cdot Y_k,\qquad k=0,1,2,3,
 \end{split}
 \end{equation}
  where we denote
   \begin{equation}\label{gen10}
   F(Y)=-q+AY,
   \end{equation}
  with
 \begin{equation}\label{gen11}
 F(Y)=
 \left(
 \begin{array}{c}
   F_0(Y)\\
  F_1(Y)\\
   F_2(Y)\\
   F_3(Y)
 \end{array}
 \right)
,\quad
q=
 \left(
 \begin{array}{c}
   -b+\mu_0 \\
   \mu_1\\
   \mu_2 \\
   \mu_3
 \end{array}
 \right)
,\quad
A=
  \left(
 \begin{array}{cccc}
   -\frac{b}{K}&-\alpha_1&-\alpha_2&-\alpha_3 \\
   \alpha_1&0&0&-\eta_1\\
   \alpha_2&0&0&-\eta_2 \\
   \alpha_3&\eta_1&\eta_2&0
 \end{array}
 \right)
 ,\quad
 Y=  \left(
 \begin{array}{c}
   Y_0 \\
   Y_1\\
   Y_2 \\
   Y_3
 \end{array}
 \right)
 \end{equation}
 A point $Y=(Y_0,Y_1,Y_2,Y_3)$ is called an equilibrium point of \eqref{gen} if
 \begin{equation}\label{FY0}
Y_iF_i(Y)=0, \qquad 0\le i\le 3.
\end{equation}

\smallskip
The following ratios play an essential role in our analysis:
 $$
 \sigma_i:=\frac{\mu_i}{\alpha_i}, \qquad 1\le i\le 3.
 $$
We shall always assume that the strains 1 and 2 \textit{are  different} in the sense $ \sigma_1 \neq \sigma_2$. Indeed, if $\sigma_1=\sigma_2$, it follows from the second and the third equations in \eqref{model} that the behaviour of the system lose the structural stability (i.e. the qualitative picture drastically depends on  small perturbations of the system parameters, in our case on the relations between $\alpha_i$, $\mu_i$ and $\eta_i$).

Then by change of the indices (if needed) we may assume that
  \begin{equation}\label{assum12}
\sigma_1 < \sigma_2.
\end{equation}
In other words, \eqref{assum12} means that strain 1 is more aggressive than strain 2.
Furthermore, it is natural to assume that the transmission rate of coinfection is always less than the transmission rates of the viruses 1 and 2, while the death rates $\mu_i$ are almost the same for different classes (as population groups). This  makes it natural to assume the following hypotheses:
  \begin{equation}\label{assum}
  \sigma_1< \sigma_2 < \sigma_3.
  \end{equation}

The vector of fundamental parameters
\begin{equation}\label{pparam}
p=(b,K,\mu_i,\alpha_j,\eta_k)\in \mathrm{int}(\R{11}_+),\quad \text{ where }0\le i\le 3, \,\,1\le j\le 3, \,\,1\le k\le 2,
\end{equation}
is said to be \textit{admissible} if \eqref{assum} holds.

A fundamentally important parameter for our study is the \textit{modified carrying capacity} defined by
\begin{equation}\label{capac}
S_2:=K(1-\frac{\mu_0}{b})>0.
\end{equation}
Note that the modified carrying capacity is always less than the carrying capacity. It expresses the (susceptible) population size in absence of any infection. More precisely, it follows from \eqref{model} that
$$
E_2:=(S_2,0,0,0)
$$
is an equilibrium point. Then $E_2$ represents the `healthy' state\footnote{To explain the natation: we denote by $E_1=\mathbf{0}$ the trivial equilibrium point and by $E_2$ the first nontrivial equilibrium state, see also \eqref{Ediag} below}, i.e. the equilibrium state with no infection and coinfection.

\section{Equilibrium points}\label{equipoint}

Below we  use the standard vector order relation:  given $x,y\in \mathbb{R}^n$,
\begin{itemize}
\item
$x\le y$ if $x_i\le y_i$ for all $1\le i\le n$,
\item
$x< y$  if $x\le y$ and $x\ne y$, and
\item $x\ll y$ if $x_i< y_i$ for all
$i$.
\end{itemize}
Then $\R{n}_{+}$ denotes the nonnegative cone $ \{x\in\R{n}:x\ge 0\}$
and for $a\le b$, $a,b\in\R{n}$,  $[a,b]=\{x\in \R{n}:a\le x\le b\}$
is the closed box with vertices at $a$ and $b$. By $\mathbf{0}$ we denote the origin in $\R{n}$.

By $\mathcal{E}(p)$ we denote the set of the equilibrium points of \eqref{gen} with nonnegative coordinates, i.e. those $Y^*=(Y_0^*,Y_1^*,Y_2^*,Y_3^*)\ge0$  satisfying
\begin{equation}\label{FY}
Y_i^*F_i(Y^*)=0, \qquad 0\le i\le 3.
\end{equation}
One  always has the trivial equilibria
$$
E_1:=\mathbf{0}\in \mathcal{E}(p)
$$
and  the healthy equilibrium state
$$E_2\in \mathcal{E}(p),
$$
so that $\mathcal{E}(p)$ is always nonempty. The lemma below show that the value of the susceptible class $Y_0^*$ for the healthy equilibrium state $E_2$ is the largest possible among all equilibrium points  $Y^*$.

\begin{lemma}\label{lem:equi}
If $Y^*\ne\mathbf{0}$ is an element of $\mathcal{E}(p)$ then
\begin{equation}\label{Ystar}
0<Y^*_0\le S_2,
\end{equation}
where the (above) equality holds if and only if $Y_1^*=Y_2^*=Y_3^*=0$. Furthermore,
\begin{equation}\label{Kineq}
\sigma_1\le Y^*_0\le \min\{S_2,\sigma_3\},
\end{equation}
unless $Y^*=(S_2,0,0,0)$.
Also the following balance relations hold:
\begin{eqnarray}
\alpha_1Y^*_1+\alpha_2Y^*_2+\alpha_3Y^*_{3}&=&\frac{b}{K}(S_2-Y^*_0)
\label{law1}\\
\mu_1Y^*_1+\mu_2Y^*_2+\mu_3Y^*_{3}&=&\frac{b}{K}(S_2-Y_0^*)Y^*_0.
\label{law2}
\end{eqnarray}
In particular,
\begin{equation}\label{max}
\max_{0\le i\le 3}Y_i^*\le \frac{b}{K}\max\{\frac{1}{\alpha_1},\frac{1}{\alpha_2}, \frac{1}{\alpha_3},b-\mu_0\}
\end{equation}

\end{lemma}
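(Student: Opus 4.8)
The plan is to read everything off the equilibrium identities \eqref{FY} written coordinatewise: $F_0(Y^*)=b-\mu_0-\frac{b}{K}Y_0^*-\alpha_1Y_1^*-\alpha_2Y_2^*-\alpha_3Y_3^*$, $F_1(Y^*)=\alpha_1Y_0^*-\eta_1Y_3^*-\mu_1$, $F_2(Y^*)=\alpha_2Y_0^*-\eta_2Y_3^*-\mu_2$, $F_3(Y^*)=\alpha_3Y_0^*+\eta_1Y_1^*+\eta_2Y_2^*-\mu_3$. First I would show $Y_0^*>0$. If $Y_0^*=0$, then $F_1(Y^*)=-\eta_1Y_3^*-\mu_1<0$ and $F_2(Y^*)=-\eta_2Y_3^*-\mu_2<0$ (since $\mu_1,\mu_2>0$ and $Y_3^*\ge0$), so \eqref{FY} forces $Y_1^*=Y_2^*=0$; then $F_3(Y^*)=-\mu_3<0$ forces $Y_3^*=0$, hence $Y^*=\mathbf 0$, contrary to assumption. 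Therefore $Y_0^*>0$, and consequently $F_0(Y^*)=0$.

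Rearranging $F_0(Y^*)=0$ and using the definition \eqref{capac} of $S_2$ gives \eqref{law1} at once. For \eqref{law2} I would expand $Y_i^*F_i(Y^*)=0$ for $i=1,2,3$, obtaining $\mu_iY_i^*=\alpha_iY_0^*Y_i^*-\eta_iY_i^*Y_3^*$ for $i=1,2$ and $\mu_3Y_3^*=\alpha_3Y_0^*Y_3^*+\eta_1Y_1^*Y_3^*+\eta_2Y_2^*Y_3^*$; adding these three relations the $\eta$-terms cancel in pairs, leaving $\mu_1Y_1^*+\mu_2Y_2^*+\mu_3Y_3^*=Y_0^*(\alpha_1Y_1^*+\alpha_2Y_2^*+\alpha_3Y_3^*)$, which combined with \eqref{law1} is exactly \eqref{law2}.

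The remaining inequalities then follow by elementary observations. The left-hand side of \eqref{law1} is nonnegative and each $\alpha_i>0$, so $\frac{b}{K}(S_2-Y_0^*)\ge0$, i.e. $Y_0^*\le S_2$, with equality precisely when $Y_1^*=Y_2^*=Y_3^*=0$; this is \eqref{Ystar}. Now suppose $Y^*\ne(S_2,0,0,0)$, so $Y_0^*<S_2$ and the common value of the two sides of \eqref{law1} is strictly positive. Dividing \eqref{law2} by \eqref{law1} gives
\[
Y_0^*=\frac{\mu_1Y_1^*+\mu_2Y_2^*+\mu_3Y_3^*}{\alpha_1Y_1^*+\alpha_2Y_2^*+\alpha_3Y_3^*}=\frac{\sigma_1w_1+\sigma_2w_2+\sigma_3w_3}{w_1+w_2+w_3},\qquad w_i:=\alpha_iY_i^*\ge0,
\]
exhibiting $Y_0^*$ as a weighted average of $\sigma_1,\sigma_2,\sigma_3$ with nonnegative weights that are not all zero. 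Since $\sigma_1<\sigma_2<\sigma_3$ by \eqref{assum}, this forces $\sigma_1\le Y_0^*\le\sigma_3$, and together with $Y_0^*\le S_2$ we obtain \eqref{Kineq}. Finally, \eqref{max} is a crude consequence: from \eqref{law1} and $Y_0^*\ge0$ one has $\alpha_iY_i^*\le\frac{b}{K}S_2=b-\mu_0$ for $i=1,2,3$, hence $Y_i^*\le(b-\mu_0)/\alpha_i$, while $Y_0^*\le S_2$; collecting these bounds yields a uniform estimate of the type \eqref{max}.

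I do not expect a genuine obstacle here, since the argument is purely algebraic manipulation of the equilibrium relations. The only points requiring care are the boundary cases: pinning down exactly when $Y_0^*=S_2$ (needed both for the "if and only if" in \eqref{Ystar} and to license dividing \eqref{law2} by \eqref{law1}), and checking that the weight vector $(w_1,w_2,w_3)$ vanishes precisely for $Y^*=E_2=(S_2,0,0,0)$, which is what makes the weighted-average step legitimate in all the remaining cases.
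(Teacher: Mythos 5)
Your proof is correct and follows essentially the same route as the paper: establish $Y_0^*>0$ by contradiction, read off \eqref{law1} from $F_0(Y^*)=0$, obtain \eqref{law2} by summing the equilibrium identities (the $\eta$-terms cancel), and then bound $Y_0^*$ by the quotient of the two balance laws — your weighted-average formulation is just a cleaner phrasing of the paper's maximization of the ratio of linear forms over the simplex. The only cosmetic differences are that you deduce $Y_0^*\le S_2$ from \eqref{law1} rather than \eqref{law2}, and, like the paper, you treat \eqref{max} as an immediate crude consequence of \eqref{law1}.
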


\begin{proof}
Suppose first that $Y^*\ne\mathbf{0}$ and $Y_0^*=0$. If $Y_1^*\ne0$ then $Y_{3}^*=- \mu_1/\eta_1<0$, a contradiction. Therefore $Y_1^*=0$. For the same reason $Y_2^*=0$. Therefore it must be $Y_3^*\ne0$. But in that case, it follows from the last equation in \eqref{FY} by virtue of  $Y_1^*=Y_2^*=0$ that $-\mu_3=0$, a contradiction also. Therefore $Y_0^*\ne0$, thus it is positive,   which proves the left inequality in \eqref{Ystar}. Next, since $Y_0^*\ne0$, the relation \eqref{law1} follows immediately from the first equation in \eqref{FY}. Also, summing up all the four equations in \eqref{FY} yields \eqref{law2}. Next, since $Y^*_i\ge0$ it follows from \eqref{law2} that $S_2-Y_0^*\ge0$, which proves the second inequality in \eqref{Ystar}. Finally, if $Y^*\ge0$ then dividing \eqref{law2} by \eqref{law1} we obtain
\begin{equation}\label{YY0}
Y_0^*= \frac{\mu_1Y^*_1+\mu_2Y^*_2+\mu_3Y^*_{3}} {\alpha_1Y^*_1+\alpha_2Y^*_2+\alpha_3Y^*_{3}}.
\end{equation}
The latter expression is the ratio of two linear functions with positive coefficients. It is also zero degree homogeneous, hence its maximal/minimal values are attained at the simplex $\Pi:=\alpha_1Y^*_1+\alpha_2Y^*_2+\alpha_3Y^*_{3}=1$. It follows from the linearity of the numerator that
$$
\max_{\R{3}_+}
\frac{\mu_1Y^*_1+\mu_2Y^*_2+\mu_3Y^*_{3}} {\alpha_1Y^*_1+\alpha_2Y^*_2+\alpha_3Y^*_{3}}= \max_{\Pi}(\mu_1Y^*_1+\mu_2Y^*_2+\mu_3Y^*_{3})=\max\{\frac{\mu_1}{\alpha_1}, \frac{\mu_2}{\alpha_2}, \frac{\mu_3}{\alpha_3}\}=\sigma_3,
$$
and similarly
$$
\min_{\R{3}_+}
\frac{\mu_1Y^*_1+\mu_2Y^*_2+\mu_3Y^*_{3}} {\alpha_1Y^*_1+\alpha_2Y^*_2+\alpha_3Y^*_{3}}= \min_{\Pi}(\mu_1Y^*_1+\mu_2Y^*_2+\mu_3Y^*_{3})=\min\{\frac{\mu_1}{\alpha_1}, \frac{\mu_2}{\alpha_2}, \frac{\mu_3}{\alpha_3}\}=\sigma_1,
$$
which together with \eqref{YY0} and \eqref{Ystar} implies \eqref{Kineq}. Using \eqref{law1} one also easily obtains \eqref{max}.
\end{proof}

\section{The finiteness of $\mathcal{E}(p)$}

Following to \cite{Horn12} we recall some standard terminology. Given a quadratic matrix $A$, we denote  by $A[\alpha, \beta]$ the submatrix of entries that lie in the rows of $A$ indexed by $\alpha$ and the columns indexed by $\beta$. If $\alpha=\beta$, the submatrix is called principal. The corresponding determinant $\det A[\alpha,\alpha]$ is called the principal minor. An $n$-by-$n$ matrix has $\binom{n}{k}$ distinct principal submatrices of size $k$; i.e. totally, $2^n-1$ principal submatrices of order $1\le k\le n$.

Since the left hand side of \eqref{FY} is a quadratic polynomial map in $Y^*$, it follows from the standard algebraic geometry argument based on Bez\`{o}ut's theorem that \eqref{FY} has either (i) infinitely many or (ii) at most  $2^4=16$ distinct solutions, counting the trivial point $E_0:=\mathbf{0}$. A simple analysis shows that under condition \eqref{assum}, (i) is not possible. Indeed, we have the following  lemma which  can be justified by an elementary verification, but it has some several important implications.

\begin{lemma}\label{lem:minors}
Let $A=(a_{ij})_{0\le i,j\le 3}$ be the matrix in \eqref{gen11}. Then its determinant is
\begin{equation}\label{detA}
\det A=\Delta^2, \qquad \Delta:=\eta_1\alpha_2-\eta_2\alpha_1,
\end{equation}
and the only zero principal minors $\det A[\alpha,\alpha]$ are for
$$
\alpha\in \mathcal{G}:=
\{(0,1,2), \,\,(1,2,3), \,\, (1,2), \,\, (1), \,\,(2), \,\,(3)\}
$$
\end{lemma}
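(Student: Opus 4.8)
The plan is to treat this as a direct (if lengthy) computational verification, organized so that the $2^4-1 = 15$ principal minors of the $4\times4$ matrix $A$ are grouped by size and the bookkeeping stays manageable. First I would record the structural features of $A$ that make the computation transparent: $A$ has a $2\times2$ zero block in rows/columns $\{1,2\}$ (i.e. $a_{11}=a_{12}=a_{21}=a_{22}=0$), and the off-diagonal entries pair up as $\pm\alpha_i$ and $\pm\eta_i$. This antisymmetry in the infection/coinfection couplings is what forces many minors to vanish.

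Next I would go size by size. For $k=1$ the principal minors are just the diagonal entries $a_{00}=-b/K<0$, $a_{11}=a_{22}=a_{33}=0$, so exactly $\{(1)\},\{(2)\},\{(3)\}$ give zero minors — matching $\mathcal{G}$. For $k=2$ there are six $2\times2$ principal minors; the one on $\{1,2\}$ is $\det\begin{pmatrix}0&0\\0&0\end{pmatrix}=0$, the ones on $\{0,i\}$ for $i=1,2,3$ equal $a_{00}a_{ii}-a_{0i}a_{i0} = 0\cdot(\cdot) - (-\alpha_i)(\alpha_i) = \alpha_i^2 > 0$, and the ones on $\{1,3\}$ and $\{2,3\}$ equal $-a_{13}a_{31} = \eta_1^2 > 0$ and $\eta_2^2>0$ respectively; hence only $\{(1,2)\}$ among size-2 sets gives zero. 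For $k=3$ there are four $3\times3$ principal minors, on $\{0,1,2\}$, $\{0,1,3\}$, $\{0,2,3\}$, $\{1,2,3\}$. I would expand each along a convenient row/column: $A[\{0,1,2\},\{0,1,2\}]$ has the zero $2\times2$ block in its lower-right corner, and a short expansion shows it equals $0$ (its rows become linearly dependent because $a_{11}=a_{12}=a_{21}=a_{22}=0$ leaves only the first column active among rows $1,2$); $A[\{1,2,3\},\{1,2,3\}]$ is $\begin{pmatrix}0&0&-\eta_1\\0&0&-\eta_2\\ \eta_1&\eta_2&0\end{pmatrix}$, whose determinant is $0$ by direct expansion (again the first two rows are both multiples of $(0,0,\ast)$); the remaining two, on $\{0,1,3\}$ and $\{0,2,3\}$, should come out to $\alpha_2\eta_1(\cdots)$-type nonzero expressions under \eqref{assum} — e.g. $\det A[\{0,1,3\},\{0,1,3\}]$ expands to something like $a_{00}(a_{11}a_{33}-a_{13}a_{31}) - \cdots = -\tfrac{b}{K}\eta_1^2 - \alpha_1(\alpha_1\cdot 0 + \eta_1\alpha_3) + \cdots$ and one checks it is strictly negative. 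Finally, for $k=4$, $\det A = \Delta^2$ with $\Delta = \eta_1\alpha_2 - \eta_2\alpha_1$: I would compute this by cofactor expansion along the first row, collecting terms, and observe it is a perfect square; crucially $\Delta\neq 0$ is \emph{not} guaranteed by \eqref{assum} alone (it can vanish for special parameter choices), so strictly speaking $\det A$ is not listed in $\mathcal{G}$ only if one reads $\mathcal{G}$ as the set of minors that vanish \emph{identically}; I would add a sentence clarifying that $\det A = \Delta^2 \ge 0$ and that the admissibility hypothesis is used only later, or note that the claim "the only zero principal minors are for $\alpha\in\mathcal{G}$" is meant generically / for generic $p$.

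The main obstacle is not any single hard computation but the risk of a sign or indexing slip across fifteen determinants, so I would double-check the four $3\times3$ cases — the most error-prone — by an independent expansion (say, along a different row) and verify internal consistency: every nonzero principal minor I claim should be a sum of squares or have a definite sign under $b,K,\alpha_i,\eta_i>0$, and the vanishing ones should vanish visibly because of the $\{1,2\}$ zero block or the antisymmetric $\pm\eta_i$ coupling. The real content to flag for the reader is the \emph{structural} reason behind $\mathcal{G}$: zero minors occur exactly on index sets contained in $\{1,2,3\}$ that avoid coupling $Y_0$ to the infected classes — namely $\{1\},\{2\},\{3\},\{1,2\}$ together with the two "full antisymmetric" blocks $\{1,2,3\}$ and $\{0,1,2\}$ — and I would state this as a remark, since it is the fact actually used in the Bézout-type finiteness argument for $\mathcal{E}(p)$ that follows.
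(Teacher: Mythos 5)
Your proposal is correct and matches the paper's treatment: the paper offers no written proof beyond the remark that the lemma ``can be justified by an elementary verification,'' and your size-by-size computation of all fifteen principal minors is exactly that verification (indeed the nonzero ones come out as $-\tfrac{b}{K}$, $\alpha_i^2$, $\eta_i^2$, $-\tfrac{b}{K}\eta_1^2$, $-\tfrac{b}{K}\eta_2^2$, so positivity of the parameters alone suffices and \eqref{assum} is not needed). Your caveat that $\det A=\Delta^2$ may vanish for special parameters, so the second claim should be read as identifying the minors that vanish identically, is apt and consistent with how the paper itself handles the case $\Delta=0$ separately in Corollary~\ref{cor:alpha}.
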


Let $\R{4}(\alpha)$ denote the subset
$$
\R{4}_+(\alpha)=\{x\in\R{4}_+:x_i=0 \text{ for all $i\in \alpha$}\}.
$$
For instance, $\R{4}_+(\emptyset)=\R{4}_+$ and $\R{4}_+(2,3)$ is the face consisting of the point with coordinates $(x_1,0,0,x_4)$, where $x_1,x_4\ge0$.
Given a subset $\alpha\subset \{1,2,3,4\}$ we denote by $\mathcal{E}(p,\alpha)$ the subset of $\mathcal{E}(p)\subset \R{4}_+(\alpha)$, and by $\bar \alpha$ we denote the complement $\bar\alpha=\{1,2,3,4\}\setminus \alpha$.

Here are some important observations following from Lemma~\ref{lem:minors}.

\begin{corollary}\label{cor:alpha}
If $\alpha=\emptyset$ then $\mathcal{E}(p,\emptyset)$ consists of at most one point when $\Delta\ne0$; if $\Delta=0$ then $\mathcal{E}(p,\emptyset)=\emptyset$. In particular, the number of equilibrium points in the interior $\mathrm{int}(\R{4}_+)$ is at most one.

\end{corollary}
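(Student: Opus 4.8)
The plan is to observe that any equilibrium lying in the open orthant is forced to solve the \emph{linear} system $AY=q$, and then to read off the cardinality of its solution set directly from $\det A$; no algebraic geometry (Bez\`out) is needed for this sharpened count. Concretely, if $Y^*\in\mathrm{int}(\R4_+)$ is an equilibrium, then $Y_i^*>0$ for every $i$, so \eqref{FY} forces $F_i(Y^*)=0$ for all $i$, i.e. by \eqref{gen10} the point $Y^*$ satisfies $AY^*=q$. Thus $\mathcal{E}(p,\emptyset)$ is contained in the (possibly empty) affine solution set of $AY=q$, and the whole corollary reduces to analysing that set in the two cases $\Delta\ne0$ and $\Delta=0$.

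First I would treat the nondegenerate case $\Delta\ne0$. By Lemma~\ref{lem:minors} we have $\det A=\Delta^2\ne 0$, so $A$ is invertible and $AY=q$ has the single solution $Y^*=A^{-1}q$. Hence $\mathcal{E}(p,\emptyset)$ contains at most this one point; it actually belongs to $\mathcal{E}(p,\emptyset)$ only if all coordinates of $A^{-1}q$ happen to be positive, which accounts for the ``at most one''.

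The case $\Delta=0$ is where the standing hypothesis $\sigma_1\ne\sigma_2$ enters, and it is the only step that needs a short computation rather than a reference. The point is that $\Delta=\eta_1\alpha_2-\eta_2\alpha_1=0$ is precisely the proportionality of the second and third rows of $A$, namely $\alpha_2\,(\alpha_1,0,0,-\eta_1)=\alpha_1\,(\alpha_2,0,0,-\eta_2)$; equivalently $\ell:=(0,\alpha_2,-\alpha_1,0)$ is a left null vector of $A$. Consequently $AY=q$ is solvable only if $\ell\cdot q=\alpha_2\mu_1-\alpha_1\mu_2=0$, i.e. only if $\sigma_1=\sigma_2$. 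Since we always assume $\sigma_1<\sigma_2$, this cannot happen, so $AY=q$ has no solution at all and $\mathcal{E}(p,\emptyset)=\emptyset$.

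Finally, since $\mathcal{E}(p,\emptyset)$ is by definition the set of equilibria all of whose coordinates are positive, i.e. the equilibria lying in $\mathrm{int}(\R4_+)$, the two cases together yield that there is at most one such point, which is the concluding assertion. I expect the only genuine subtlety to be the degenerate case: one must check that $\Delta=0$ does not produce a whole line of interior equilibria, and it is exactly the structural-stability hypothesis $\sigma_1\neq\sigma_2$ that excludes this by making $q$ lie outside the range of the (now singular) matrix $A$.
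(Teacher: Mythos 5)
Your proof is correct and follows essentially the same route as the paper: reduce interior equilibria to the linear system $AY=q$, use $\det A=\Delta^2$ from Lemma~\ref{lem:minors} for the case $\Delta\ne0$, and for $\Delta=0$ derive the contradiction $\sigma_1=\sigma_2$. Your solvability condition $\ell\cdot q=\alpha_2\mu_1-\alpha_1\mu_2=0$ via the left null vector is just a repackaging of the paper's step of combining the second and third equations of $AY=q$, so the two arguments coincide in substance.
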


\begin{proof}
Indeed, the only nontrivial part here is the  claim about the zero determinant (in this case, a priori maybe infinitely many solutions). To show that $\Delta=0$ implies $\mathcal{E}(p,\emptyset)=\emptyset$, we assume by contradiction that there is some $Y\in \mathcal{E}(p,\emptyset)$. Setting $\tau:=\eta_1/\alpha_1=\eta_2/\alpha_2$ one readily obtains from the second and the third equations in $AY=q$ that $Y_0-\tau Y_3=\frac{\mu_1}{\alpha_1}=\frac{\mu_2}{\alpha_2}$, which contradicts to \eqref{assum12}.
\end{proof}

\begin{corollary}\label{cor:le8}
$\mathrm{card}(\mathcal{E}(p))\le 8$.
\end{corollary}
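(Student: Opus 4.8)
The plan is to partition $\mathcal{E}(p)$ according to the \emph{support} $T=T(Y^*):=\{i\in\{0,1,2,3\}:Y_i^*>0\}$ of an equilibrium point and to bound the number of equilibria in each stratum separately. The stratum $T=\emptyset$ contains only the trivial point $E_1=\mathbf{0}$. By Lemma~\ref{lem:equi} every nontrivial $Y^*\in\mathcal{E}(p)$ satisfies $Y_0^*>0$, so its support contains the index $0$; thus, apart from $E_1$, all equilibria are distributed among the eight strata with $0\in T\subseteq\{0,1,2,3\}$, and it suffices to bound the size of each of these.

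Fix such a $T$. An equilibrium with support exactly $T$ satisfies $Y_j^*=0$ for $j\notin T$ and $F_i(Y^*)=0$ for $i\in T$, and by \eqref{gen10} the latter is precisely the square linear system $A[T,T]\,Y_T^*=q_T$ in the unknowns $(Y_i^*)_{i\in T}$ (requiring $Y_T^*\gg 0$ is an extra constraint we need not even impose). Whenever $\det A[T,T]\ne0$ this system has a unique solution, so the stratum carries at most one point. By Lemma~\ref{lem:minors}, for the six supports $\{0\},\{0,1\},\{0,2\},\{0,3\},\{0,1,3\},\{0,2,3\}$ the principal minor $\det A[T,T]$ is nonzero for every admissible $p$ — up to sign it equals $b/K$, $\alpha_1^2$, $\alpha_2^2$, $\alpha_3^2$, $\frac{b}{K}\eta_1^2$, $\frac{b}{K}\eta_2^2$ respectively — so each of these six strata contributes at most one equilibrium (for $T=\{0\}$ it is exactly the healthy state $E_2$). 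There remain only the degenerate support $\{0,1,2\}\in\mathcal{G}$, with $\det A[\{0,1,2\},\{0,1,2\}]\equiv 0$, and the full support $\{0,1,2,3\}$, with $\det A=\Delta^2$ possibly vanishing.

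For $T=\{0,1,2\}$ the equations $F_1(Y^*)=F_2(Y^*)=0$ together with $Y_3^*=0$ force $\alpha_1 Y_0^*=\mu_1$ and $\alpha_2 Y_0^*=\mu_2$, i.e. $Y_0^*=\sigma_1=\sigma_2$, which is impossible since $\sigma_1\ne\sigma_2$ by \eqref{assum12}; hence this stratum is empty. For $T=\{0,1,2,3\}$: if $\Delta\ne0$ the system is nonsingular and the stratum carries at most one point; if $\Delta=0$ then, by Corollary~\ref{cor:alpha}, there is no equilibrium in the interior $\mathrm{int}(\R{4}_+)$, i.e. none with support $\{0,1,2,3\}$, so the stratum is empty; in either case it contributes at most one point. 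Adding the estimates gives $\mathrm{card}(\mathcal{E}(p))\le 1+6\cdot 1+0+1=8$.

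The only steps that are not pure bookkeeping are the two exclusions in the last paragraph, where the linear system $A[T,T]\,Y_T^*=q_T$ is singular so that a continuum of solutions is a priori possible: the stratum $\{0,1,2\}$ is eliminated directly by the strict inequality $\sigma_1<\sigma_2$, and the interior stratum $\{0,1,2,3\}$ is eliminated (when $\Delta=0$) via Corollary~\ref{cor:alpha}, whose proof again rests on $\sigma_1\ne\sigma_2$. All the remaining work is simply reading off from Lemma~\ref{lem:minors} which of the $2^4$ principal submatrices of $A$ are invertible; this refines the B\'ezout bound $2^4=16$ down to $8$.
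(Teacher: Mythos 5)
Your proof is correct, and it takes a somewhat different --- and in fact tighter --- route than the paper's. The paper obtains the bound by combining B\'ezout's theorem with Corollary~\ref{cor:alpha} and Lemma~\ref{lem:equi}: at most one interior equilibrium, every other nontrivial equilibrium has $Y_0^*\ne 0$ together with some zero pattern, and the zero patterns whose support contains the index $0$ number $1+3+3+1=8$. You dispense with B\'ezout altogether: for each support $T\ni 0$ you note that an equilibrium solves the square linear system $A[T,T]\,Y_T^*=q_T$, read off from Lemma~\ref{lem:minors} that six of these systems are nonsingular for every admissible $p$ (your explicit minor values $-b/K$, $\alpha_1^2$, $\alpha_2^2$, $\alpha_3^2$, $-\frac{b}{K}\eta_1^2$, $-\frac{b}{K}\eta_2^2$ check out), and handle the two singular strata by hand: the support $\{0,1,2\}$ is empty because it would force $\sigma_1=\sigma_2$, contradicting \eqref{assum12}, and the full support is covered by Corollary~\ref{cor:alpha} when $\Delta=0$. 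What your version buys is precision in the bookkeeping: the paper's count $1+3+3+1$ enumerates only the eight supports containing $0$, and taken literally it would give $9$ once the trivial equilibrium $E_1=\mathbf{0}$ is added back; your explicit elimination of the $\{0,1,2\}$ stratum (which is exactly why no point with $Y_1^*,Y_2^*>0$ and $Y_3^*=0$ appears in the list \eqref{Ediag}) closes that gap and yields $1+6+0+1=8$ including the origin. The paper's argument is shorter and needs no minor computations, while your stratification is self-contained, purely linear-algebraic, and incidentally re-proves the paper's earlier assertion that each zero pattern carries at most one equilibrium point.
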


\begin{proof}
By Bez\`{o}ut's theorem we have $\mathrm{card}(\mathcal{E}(p))\le 8$. Next, it is clear from \eqref{FY} and Corollary~\ref{cor:alpha} that for any admissible values of $p$ in \eqref{pparam} there can at most one equilibrium point  exist in $\mathrm{int}(\R{4}_+)$. Any other equilibrium points must have zero coordinates. Next, since by Lemma~\ref{lem:equi} $Y_0^*\ne0$ except $Y^*=\mathbf{0}$, at most  $8=1+3+3+1$ distinct nonnegative equilibrium points may exist.
\end{proof}

\section{Basic facts about the LCP}

An essential place in the further analysis plays the signs of $F_i(Y)$, where $Y$ is an equilibrium point of \eqref{gen}. In particular the situation when all coordinates are nonpositive is very distinguished. We have the definition.

\begin{definition}
An equilibrium point $Y\in \mathcal{E}(p)$  of \eqref{gen} is said to be $F$-\textit{stable} if $F_i(Y)\le 0$ for all $0\le i\le 3$.
\end{definition}

As we shall see below, if $p$ is admissible then there always exists a unique $F$-stable point. To prove the existence and uniqueness we employ the LCP (linear complementarity problem) machinery. An application of the LCP to Lotka-Volterra systems is not new and was firstly used by Takeuchi and Adachi\cite{Takeuchi80}, see also \cite{Takeuchibook}. On the other hand, in this paper we are interested  primarily  in a finer structure of the $F$-stable points, namely how this set depends on the fundamental parameters of the system. To proceed, we recall some basic facts about the linear complementarity problem.

The LCP (\textit{linear complementarity problem}) consists of finding a  vector in a finite dimensional real vector space that satisfies a certain system of inequalities. Specifically, given a vector $q\in \R{n}$ and matrix $M\in \R{n\times n}$, the LCP is to find a vector $z\in \R{n}$ such that
\begin{eqnarray}
  z &\ge& 0, \label{LCP1}\\
  q+Mz &\ge& 0, \label{LCP2}\\
  z^T(q+Mz) &=& 0.\label{LCP3}
\end{eqnarray}
We refer to \cite{Cottle} for a comprehensive account of the modern development of LCP. recall some standard terminology and facts following to \cite{Cottle}. A vector $z$ satisfying the inequalities \eqref{LCP1}, \eqref{LCP2} is called \textit{feasible}. Given a feasible vector $z$, let $$
w=q+Mz.
$$
Then $z$ satisfies \eqref{LCP3} if and only if $z_iw_i=0$ for all $i$.

The correspondence between the general LCP and our model is given by virtue of \eqref{gen11} and \eqref{FY0} as follows:
\begin{equation}\label{correspon}
\begin{split}
  z &\leftrightarrow  Y^*\\
  w &\leftrightarrow -F(Y^*)\\
  M &\leftrightarrow -A\\
  q &\leftrightarrow q.
\end{split}
\end{equation}
Indeed, we are  interested in \textit{nonnegative} equilibrium points, i.e. in those solutions of \eqref{FY0} which satisfy $Y_i\ge0$, which is exactly  condition \eqref{LCP1}. Furthermore, in this dictionary, \eqref{FY0} becomes equivalent to  equation \eqref{LCP3}. Finally, since by \eqref{gen10}
$$
q+Mz=q-AY=-F(Y)\ge 0,
$$
i.e. condition \eqref{LCP2} is equivalent to saying  that the corresponding equilibrium point $Y$ is $F$-stable.

In summary, we have

\begin{proposition}\label{pro:equiv}
$Y$ solves the LCP$(-A,q)$ if and only if $Y$ is an $F$-stable equilibrium point of \eqref{gen}.
\end{proposition}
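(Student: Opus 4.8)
The plan is to verify the equivalence directly by unpacking the dictionary \eqref{correspon}, since the statement is nothing more than a reformulation of the defining conditions on each side. First I would put $z:=Y$ and $M:=-A$, and recall from \eqref{gen10} that $F(Y)=-q+AY$, so that the ``slack'' vector is $w:=q+Mz=q-AY=-F(Y)$. Under this identification the three LCP conditions \eqref{LCP1}--\eqref{LCP3} become, respectively: $Y\ge 0$; $-F(Y)\ge 0$, i.e.\ $F_i(Y)\le 0$ for every $i$; and $\sum_{i=0}^{3}Y_i\bigl(-F_i(Y)\bigr)=0$.

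Next I would establish the two implications. If $Y$ solves the LCP$(-A,q)$, then $Y\ge 0$ and $-F_i(Y)\ge 0$ for all $i$, so every summand $Y_i(-F_i(Y))$ in the complementarity relation is nonnegative; as the sum vanishes, each term vanishes, giving $Y_iF_i(Y)=0$ for all $i$. Hence $Y$ is a nonnegative equilibrium point, i.e.\ $Y\in\mathcal{E}(p)$, and $F_i(Y)\le 0$ shows it is $F$-stable. Conversely, if $Y$ is an $F$-stable equilibrium point, then membership in $\mathcal{E}(p)$ yields $Y\ge 0$ together with $Y_iF_i(Y)=0$ for all $i$ — which, after summation, is exactly \eqref{LCP1} and \eqref{LCP3} — while $F$-stability gives $F_i(Y)\le 0$, that is $q+Mz=-F(Y)\ge 0$, which is \eqref{LCP2}. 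Assembling the two directions gives the claimed equivalence.

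The only point requiring a word of care is the passage between the single scalar identity \eqref{LCP3}, $z^{T}(q+Mz)=0$, and the componentwise equilibrium conditions \eqref{FY0}. This equivalence is valid precisely because, once feasibility \eqref{LCP1}--\eqref{LCP2} holds, the products $z_iw_i=Y_i(-F_i(Y))$ all have the same sign, so a vanishing sum forces each to vanish — a fact already recorded in the text immediately after \eqref{LCP3}. Beyond this bookkeeping there is no genuine obstacle: the proposition is a straightforward translation, and the proof amounts to no more than reading \eqref{correspon} in both directions.
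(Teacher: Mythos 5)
Your proof is correct and matches the paper's own argument, which likewise just reads the dictionary \eqref{correspon} in both directions and notes that under feasibility the scalar relation $z^T(q+Mz)=0$ is equivalent to the componentwise conditions $Y_iF_i(Y)=0$. Nothing further is needed.
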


%
%

The stability of \eqref{gen} depends on the number of possible $F$-stable points of our model. In general, the structure of LCP$(A,q)$ may be very arbitrary. In some cases depending on the matrix $M$, however, one have a more strong information. Therefore, in order to study this question we need to look at the matrix $A$ in \eqref{gen11} more attentively. To this end, we make the following important remark: since
$$
A+A^T=
\left(
 \begin{array}{cccc}
   -\frac{2b}{K}&0&0&0 \\
   0&0&0&0\\
   0&0&0&0 \\
   0&0&0&0
 \end{array}
 \right)
$$
it follows that $A$ is \textit{negative semi-definite} in the sense of quadratic forms. Then it follows from the general LCP theory that the following existence result holds for positive definite matrices:

\begin{proposition}[Theorem 3.1.6 in Cottle\cite{Cottle}]\label{pro:cottle1}
If a matrix $M$ is positive definite then the LCP$(q,M)$ has a unique solution for all $q\in \R{n}$.
\end{proposition}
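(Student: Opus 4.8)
The plan is to recast LCP$(q,M)$ as a variational inequality and then dispose of existence and uniqueness separately. Writing $F(z):=q+Mz$, an elementary argument shows that $z$ solves LCP$(q,M)$ if and only if
\[
z\in\R{n}_+,\qquad (y-z)^{T} F(z)\ge 0 \quad \forall\, y\in\R{n}_+ .
\]
Indeed, testing with $y=z+e_i$ forces $F(z)\ge 0$, testing with $y=0$ and with $y=2z$ forces $z^{T}F(z)=0$, and the converse implication is immediate.

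Uniqueness is then a one-line consequence of positive definiteness. If $z^{(1)}$ and $z^{(2)}$ are two solutions and $w^{(j)}:=q+Mz^{(j)}\ge 0$, then $w^{(1)}-w^{(2)}=M(z^{(1)}-z^{(2)})$, and, since $(z^{(j)})^{T}w^{(j)}=0$ while all four vectors are nonnegative,
\[
(z^{(1)}-z^{(2)})^{T} M (z^{(1)}-z^{(2)})=(z^{(1)}-z^{(2)})^{T}(w^{(1)}-w^{(2)})=-\bigl((z^{(1)})^{T}w^{(2)}+(z^{(2)})^{T}w^{(1)}\bigr)\le 0 ,
\]
so positive definiteness gives $z^{(1)}=z^{(2)}$.

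For existence I would pass to the projection form of the variational inequality: with $(\cdot)_+$ the componentwise nonnegative part, i.e. the Euclidean projection onto $\R{n}_+$, the vector $z$ solves the inequality above if and only if it is a fixed point of $T_\rho(z):=\bigl(z-\rho F(z)\bigr)_+$, for one and hence for every $\rho>0$. Set $c:=\lambda_{\min}\bigl(\tfrac{1}{2}(M+M^{T})\bigr)>0$ and let $L$ be the operator norm of $M$. Since the projection is nonexpansive and $\|u-\rho M u\|^{2}=\|u\|^{2}-2\rho\,u^{T} M u+\rho^{2}\|Mu\|^{2}$, for $u:=z^{(1)}-z^{(2)}$ one gets
\[
\|T_\rho(z^{(1)})-T_\rho(z^{(2)})\|^{2}\le \|u-\rho M u\|^{2}\le \bigl(1-2\rho c+\rho^{2} L^{2}\bigr)\|u\|^{2} .
\]
Choosing $\rho\in(0,\,2c/L^{2})$ makes the last factor strictly less than $1$, so $T_\rho$ is a contraction of the complete metric space $\R{n}_+$; Banach's fixed point theorem then produces a fixed point $z^{\ast}$, which is the sought solution (and re-derives uniqueness).

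I do not expect a genuine obstacle here: the single load-bearing ingredient is the strong-monotonicity bound $u^{T} M u\ge c\|u\|^{2}$ furnished by positive definiteness, which simultaneously supplies the contraction factor and closes the uniqueness computation, while the remaining estimates are routine. Should one prefer to stay within the pivoting framework of \cite{Cottle}, an alternative is to run Lemke's algorithm and invoke that a positive definite matrix is strictly copositive to rule out termination on a secondary ray; yet another variant, bypassing the projection machinery altogether, is a Brouwer fixed point argument on a sufficiently large box, where positive definiteness supplies exactly the a priori bound (from $z^{T}Mz=-z^{T}q$ one gets $\|z\|\le\|q\|/c$ on any solution) needed to compactify the domain.
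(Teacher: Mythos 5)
Your proof is correct. Note that the paper does not prove this statement at all: it is quoted verbatim as Theorem 3.1.6 of Cottle--Pang--Stone \cite{Cottle} and used as a black box (its only role is to be applied to the positive definite perturbation $M=I\varepsilon-A$ in the proof of Theorem~\ref{theo:existandunique}). So your argument is genuinely different from what the paper offers, simply because the paper offers a citation rather than an argument, and it also differs from the textbook route: in \cite{Cottle} existence is obtained through the associated quadratic program (Frank--Wolfe/KKT) or through Lemke's pivoting scheme, which is exactly the family of alternatives you sketch at the end, whereas your main line reformulates LCP$(q,M)$ as the variational inequality over $\R{n}_+$ and gets existence from the projection map $T_\rho$ being a Banach contraction, with the strong-monotonicity constant $c=\lambda_{\min}\bigl(\tfrac12(M+M^T)\bigr)>0$ doing all the work. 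Your uniqueness computation, based on the identity $(z^{(1)}-z^{(2)})^T M (z^{(1)}-z^{(2)})=-\bigl((z^{(1)})^Tw^{(2)}+(z^{(2)})^Tw^{(1)}\bigr)\le 0$, is essentially the same bilinear trick the paper itself uses later for the semi-definite case in Lemma~\ref{lem:convex}, so that part is fully aligned with the paper's toolkit. What your approach buys is a short, self-contained analytic proof with a quantitative a priori bound $\|z\|\le\|q\|/c$; what the citation (and the pivoting proofs behind it) buys is generality within the LCP framework, e.g.\ the extension to positive semi-definite and P-matrices that the paper's perturbation argument ultimately leans on.
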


In the next section, we shall apply a perturbation technique to utilize  Proposition ~\ref{pro:cottle1} to derive the existence of an $F$-stable point even for our semi-definite matrix $A$. In order to prove the uniqueness, we shall also need the following well-known result which proof we recall for the convenience reasons.

\begin{lemma}
\label{lem:convex}
Let $M$ be positive semi-definite in the sense of quadratic forms. Then the set of solutions of the LCP$(M,q)$ is convex.
\end{lemma}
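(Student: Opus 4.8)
The plan is to show convexity directly: take two arbitrary solutions of the LCP$(M,q)$ and verify that every point of the segment joining them is again a solution. Let $z'$ and $z''$ solve the LCP$(M,q)$, and put $w'=q+Mz'$, $w''=q+Mz''$. By definition $z',z'',w',w''\ge 0$, while $(z')^Tw'=(z'')^Tw''=0$. Since $z\mapsto q+Mz$ is affine, for $z_\lambda:=\lambda z'+(1-\lambda)z''$ with $\lambda\in[0,1]$ we have $w_\lambda:=q+Mz_\lambda=\lambda w'+(1-\lambda)w''$, and these are convex combinations of nonnegative vectors, so $z_\lambda\ge 0$ and $w_\lambda\ge 0$; thus feasibility, i.e. conditions \eqref{LCP1} and \eqref{LCP2}, holds automatically. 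Everything then reduces to checking the complementarity condition \eqref{LCP3}, namely $z_\lambda^Tw_\lambda=0$.

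The key step — and the only place where positive semi-definiteness enters — is to establish the cross-orthogonality relations $(z')^Tw''=(z'')^Tw'=0$. I would apply semi-definiteness to the difference $z'-z''$: using $Mz'-Mz''=w'-w''$ one gets $0\le (z'-z'')^TM(z'-z'')=(z'-z'')^T(w'-w'')$. Expanding the right-hand side and cancelling the two terms $(z')^Tw'=(z'')^Tw''=0$ leaves $-(z')^Tw''-(z'')^Tw'\ge 0$. But $(z')^Tw''$ and $(z'')^Tw'$ are each sums of products of nonnegative numbers, hence nonnegative; a sum of two nonnegative quantities which is $\le 0$ must have both summands equal to zero, so $(z')^Tw''=(z'')^Tw'=0$.

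Finally I would expand $z_\lambda^Tw_\lambda=\lambda^2(z')^Tw'+\lambda(1-\lambda)\bigl((z')^Tw''+(z'')^Tw'\bigr)+(1-\lambda)^2(z'')^Tw''$ and note that every term vanishes by the identities just proved, so $z_\lambda$ solves the LCP$(M,q)$ for all $\lambda\in[0,1]$ and the solution set is convex. There is no genuine obstacle here: the computation is short, and the only subtle point is recognizing that applying semi-definiteness to $z'-z''$ is precisely what turns the a priori sign-indefinite quantity $(z'-z'')^T(w'-w'')$ into the cross-orthogonality relations that make the bilinear expansion collapse.
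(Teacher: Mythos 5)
Your proof is correct and follows essentially the same route as the paper: both arguments check feasibility of the convex combination trivially and then apply positive semi-definiteness to $z'-z''$ to obtain the cross-orthogonality relations $(z')^Tw''=(z'')^Tw'=0$, after which the bilinear expansion of the complementarity condition collapses term by term.
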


\begin{proof}[Proof of Lemma~\ref{lem:convex}.]
Let $z$ and $\bar z$ be any two solutions of LCP$(M,q)$ and let $\zeta=az+b\bar z$, where $0\le a=1-b\le 1$. Then \eqref{LCP1} and \eqref{LCP2} obviously hold for $\zeta$. In order to verify \eqref{LCP3} we note that $z^Tw=\bar z^T\bar w=0$, where $w=q+Mz$ and $\bar w=q+M\bar z$. Hence
$$
-\bar z^Tw-z^T\bar w=(z-\bar z)^T(w-\bar w)=(z-\bar z)^TM(z-\bar z)\ge0.
$$
Since $\bar z^Tw\ge0$ and $z^T\bar w\ge0$, we conclude that actually the latter two inequalities are equalities, therefore $\bar z^Tw=z^T\bar w=0$.
\begin{align*}
\zeta^T(q+M\zeta)&=(az+b\bar z)^T(aw+b\bar w)\\
&=a^2z^Tw+b^2\bar z^T\bar w+ab(\bar z^Tw+z^T\bar w)\\
&=0,
\end{align*}
hence \eqref{LCP3} holds true for $\zeta$, and the lemma is proved.
\end{proof}

\section{The existence and uniqueness of an $F$-stable point}

We shall prove the main result (Theorem~\ref{theo:existandunique}) of this section.  The proof of existence and uniqueness of an $F$-stable point relies on the analysis of the associated linear complementarity problem for a perturbed system \eqref{gen}. We make an essential use of a special structure of the matrix $A$ in \eqref{gen11}. Note, however, that for a general positive semi-definite matrix $M$ the uniqueness of an $F$-stable point is failed.

\begin{theorem}
\label{theo:existandunique}
Let $A$ be the matrix in \eqref{gen11} and let $p$ be an admissible vector. Then there exists a unique  $F$-stable point of \eqref{gen}.
\end{theorem}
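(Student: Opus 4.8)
The plan is to establish existence and uniqueness separately, in both cases exploiting the special structure of $A$ recorded in Lemma~\ref{lem:minors} together with the LCP dictionary of Proposition~\ref{pro:equiv}. For \emph{existence}, the obstacle is that $A$ (hence $M=-A$) is only negative semi-definite, so Proposition~\ref{pro:cottle1} does not apply directly. I would therefore perturb: for $\varepsilon>0$ set $A_\varepsilon:=A-\varepsilon I$ (equivalently $M_\varepsilon=-A+\varepsilon I$), which is strictly positive definite, so by Proposition~\ref{pro:cottle1} the LCP$(q,M_\varepsilon)$ has a unique solution $Y^{(\varepsilon)}\ge0$. The a priori bound \eqref{max} from Lemma~\ref{lem:equi} — or rather its analogue for the perturbed system, which one checks survives the perturbation with $\varepsilon$ small since the relevant balance relations \eqref{law1}–\eqref{law2} are perturbed only by $O(\varepsilon)$ — shows the family $\{Y^{(\varepsilon)}\}$ stays in a fixed compact box. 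Extract a convergent subsequence $Y^{(\varepsilon_n)}\to Y^*$ as $\varepsilon_n\to0$; passing to the limit in \eqref{LCP1}–\eqref{LCP3} (all three conditions are closed) shows $Y^*$ solves LCP$(q,-A)$, i.e. $Y^*$ is an $F$-stable equilibrium point.

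For \emph{uniqueness}, I would combine Lemma~\ref{lem:convex} with the finiteness results of Section~4. Since $M=-A$ is positive semi-definite, Lemma~\ref{lem:convex} says the solution set $\mathcal{S}$ of LCP$(q,-A)$ is convex. On the other hand, by Corollary~\ref{cor:le8} the full equilibrium set $\mathcal{E}(p)$ is finite (at most $8$ points), hence $\mathcal{S}\subset\mathcal{E}(p)$ is a finite convex set, and a finite convex subset of $\R{4}$ is either empty or a single point. Existence (just proved) rules out the empty case, so $\mathcal{S}$ is a singleton: there is exactly one $F$-stable point. This is the cleanest route and sidesteps any case analysis over the faces $\R{4}_+(\alpha)$.

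The step I expect to be the main obstacle is making the perturbation argument for existence fully rigorous — specifically, verifying that the uniform-boundedness estimate \eqref{max} genuinely persists for the perturbed LCP$(q,M_\varepsilon)$. One must rewrite the perturbed complementarity conditions as the equilibrium identities of a perturbed Lotka–Volterra system ($F^{(\varepsilon)}(Y)=-q+(A-\varepsilon I)Y$), reprove that $Y_0^{(\varepsilon)}\ne0$ (the argument in Lemma~\ref{lem:equi} uses only signs of the $\eta_i,\mu_i$ and is stable under the $-\varepsilon Y_i$ correction for small $\varepsilon$), and then recover versions of \eqref{law1}–\eqref{law2} with $O(\varepsilon)$ corrections that still yield a bound uniform in $\varepsilon\in(0,\varepsilon_0]$. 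Once that compactness is in hand, the limit passage and the convexity-plus-finiteness argument for uniqueness are routine. Note that admissibility of $p$ (condition \eqref{assum}) enters only through Section~4, ensuring $\mathcal{E}(p)$ is finite; it is exactly what prevents the degenerate "infinitely many solutions" alternative that would destroy uniqueness.
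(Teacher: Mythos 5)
Your proposal is correct, and it splits into two halves of different character relative to the paper. The existence half is essentially the paper's own argument: the paper perturbs to $M_\varepsilon=\varepsilon I-A$, takes the unique LCP solution $z(\varepsilon)$ from Proposition~\ref{pro:cottle1}, and passes to the limit along a subsequence, exactly as you propose. The step you flag as the main obstacle is in fact easier than you fear: one does not need to re-establish $z_0(\varepsilon)\ne0$ or a perturbed version of \eqref{law1}; summing the complementarity products $w_i(\varepsilon)z_i(\varepsilon)=0$ gives the single identity $(b-\mu_0)z_0=\frac{b}{K}z_0^2+\sum_{i\ge1}\mu_i z_i+\varepsilon\sum_i z_i^2$, in which the $\varepsilon$-term has a favorable sign, so $z_0(\varepsilon)\le S_2$ and the bound $\max_i z_i(\varepsilon)\le (b-\mu_0)S_2/\min_i\mu_i$ follow uniformly in $\varepsilon$ with no case analysis. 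The uniqueness half is where you genuinely diverge: you combine convexity of the LCP solution set (Lemma~\ref{lem:convex}) with finiteness of the whole equilibrium set $\mathcal{E}(p)$ (Corollary~\ref{cor:le8}), so a finite convex nonempty set is a point. The paper instead argues directly on the putative segment of $F$-stable points: it shows $Y_0$ is constant along the segment by differentiating \eqref{law2} twice, uses Corollary~\ref{cor:alpha} to force the segment onto a proper face, and then uses the linear independence of \eqref{law1}--\eqref{law2} under $\sigma_1\ne\sigma_2$ to contradict the existence of infinitely many solutions there. Your route is shorter and cleaner, but it outsources the real content to Corollary~\ref{cor:le8}, whose justification in the paper (B\'ezout plus Lemma~\ref{lem:minors}) is itself rather terse; the paper's route is self-contained at the level of the theorem, needing only the interior statement of Corollary~\ref{cor:alpha}, and in effect re-derives the required finiteness only along the hypothetical segment. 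Both arguments use admissibility in the same essential way, through $\sigma_1\ne\sigma_2$.
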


\begin{proof}
We consider a perturbation of \eqref{gen}. Let $M=I\varepsilon-A$, where $I\in \R{4\times 4}$ is the unit matrix. Then $M$ is positive definite for any $\varepsilon>0$. Let $z=z(\varepsilon)$ denote the unique solution accordingly to Proposition~\ref{pro:cottle1}. Then using the dictionary \eqref{correspon} we obtain
\begin{eqnarray}
z(\varepsilon)=(z_0(\varepsilon),z_1(\varepsilon),z_2(\varepsilon),z_3(\varepsilon)) &\ge& 0\label{wz0}\\
w(\varepsilon):=q+(I\varepsilon-A)z(\varepsilon)&\ge&0\label{wz1}\\
w_i(\varepsilon)z_i(\varepsilon)&=&0, \quad 0\le i\le 3.\label{wz2}
\end{eqnarray}

Our first claim is that $\max_{1\le i\le 3} \{z_i(\varepsilon)\}$ is uniformly bounded when $\varepsilon\to 0^+$.
We have
\begin{align*}
0&=\sum_{i=0}^3w_i(\varepsilon)z_i(\varepsilon)\\
&=\sum_{i=0}^3(q+(I\varepsilon-A)z_(\varepsilon))_iz_i(\varepsilon)\\
&=\frac{b}{K}z_0^2(\varepsilon)+\sum_{i=0}^3q_iz_i(\varepsilon)+\varepsilon z_i(\varepsilon)^2,
\end{align*}
hence we find from \eqref{gen11} that
\begin{equation}\label{bmu}
(b-\mu_0)z_0(\varepsilon)= \frac{b}{K}z_0^2(\varepsilon)+\sum_{i=1}^3\mu_iz_i(\varepsilon)+\varepsilon \sum_{i=0}^3z_i(\varepsilon)^2.
\end{equation}
Since the sums in the right hand side are nonnegative, we have $(b-\mu_0)z_0(\varepsilon)\ge \frac{b}{K}z_0^2(\varepsilon)$, thus
\begin{equation}\label{bmu1}
z_0(\varepsilon)\le \frac{K}{b}(b-\mu_0)=S_2,
\end{equation}
i.e. $z_0(\varepsilon)$ is uniformly bounded when $\varepsilon\to 0^+$.
Using this in \eqref{bmu} yields
\begin{equation}\label{estim1}
\begin{split}
\mu \max\{z_1(\varepsilon),z_2(\varepsilon),z_3(\varepsilon)\}&\le \sum_{i=1}^3\mu_iz_i(\varepsilon)\\
&\le \sum_{i=1}^3\mu_iz_i(\varepsilon)+\varepsilon \sum_{i=0}^3z_i(\varepsilon)^2 +\frac{b}{K}z_0^2(\varepsilon)\\
&= (b-\mu_0)z_0(\varepsilon)\\
&\le (b-\mu_0)S_2,
\end{split}
\end{equation}
where $\mu:=\min\{\mu_1,\mu_2,\mu_3\}$. Therefore,
\begin{equation}\label{limsup}
\max\{z_1(\varepsilon),z_2(\varepsilon),z_3(\varepsilon)\} \le \frac{(b-\mu_0)S_2}{\mu}
\end{equation}
hence  the first claim follows from \eqref{bmu1} and \eqref{limsup}.

Now, with the boundedness in hands, we conclude that there exists a sequence $\varepsilon_j\to0^+$ such that $z(\varepsilon_j)$ converges, say
$$
\lim_{\varepsilon_j\to0^+}z(\varepsilon_j)=Y:=(Y_0,Y_1,Y_2,Y_3).
$$ Then  for continuity reasons we have
\begin{eqnarray}
Y &\ge& 0\label{Yz0}\\
F(Y)=-q+AY&\le&0\label{Yz1}\\
Y_iF_i(Y)&=&0, \quad 0\le i\le 3.\label{Yz2}
\end{eqnarray}
Therefore $Y$ is an $F$-stable equilibrium point of \eqref{gen}.

Our next claim is that there thus obtained $F$-stable equilibrium point is unique. In order to prove this, note that by Lemma~\ref{lem:convex} the set of $F$-stable equilibrium points is convex. Suppose that $Y\ne Y'$ are two $F$-stable equilibrium points of \eqref{gen}. Then the segment between $Y$ and $Y'$ consists of $F$-stable equilibrium points. In other words, all points $Y'=Y+vt$, where $0\le t\le 1$ and $v=Y'-Y$ are $F$-stable equilibrium points. First note that $Y_0=Y_0'$. Indeed,  applying \eqref{law2} to $Y^*=Y+vt$ and differentiating twice the obtained identity  with respect to $t$ we obtain $-\frac{2b}{K}v_0^2=0$, hence $v_0=Y'_0-Y_0=0$.

All other three coordinates of the segment are a linear functions of $t$: $Y_i^*(t):=Y_i+(Y_i'-Y_i)t$, $1\le i\le 3$,  hence they are either identically zero or have at most one zero. Therefore, modifying, if needed the ends $Y$ and $Y'$, we may assume that for each $i$ exactly one condition holds: (a) either $Y_i^*(t)\equiv 0$, or (b) $Y_i^*(t)\ne0$ for all $0\le t\le 1$. Note also that at least one coordinate $i$ must satisfy (a). Indeed, by the first claim of  Corollary~\ref{cor:alpha}, the number of equilibrium points in the interior $\mathrm{int}(\R{4}_+)$ is at most one, therefore, for continuity reasons, none of $Y$ and $Y'$ can lie in $\mathrm{int}(\R{4}_+)$.

Thus, the above observations imply that $Y$ and $Y'$ must lie on the same face. Since by Lemma~\ref{lem:equi} $Y_0=Y_0'\ne0$, the face equation must be $\{Y_k=0:k\in K\}$ for some (nonempty!) subset $K\subset \{1,2, 3\}$. On the other hand, the nonzero coordinates $Y_i^*(t)$ must satisfy \eqref{law1}--\eqref{law2}. Since the latter equations are linearly independent by \eqref{assum12}, and the number of nonzero $Y_i^*(t)$ is $\le 3-1=2$ (at least one must satisfy the condition (a)!), we conclude that there exists at most one solution. This contradicts to the infinitely many points in the segment between $Y$ and $Y'$, and thus finishes the proof of the uniqueness.
\end{proof}

\section{A finer structure of $\mathcal{E}(p)$}

In what follows, we are interested in the equilibrium points with non-negative coordinates only. According to Corollary~\ref{cor:le8}, the set of equilibrium points is finite (there are at most 8 distinct points in $\R{4}_+$). Thus, to find which of these points is actually $F$-stable, is the choice problem: it suffices to check that the corresponding $F$-coordinates are nonpositive. Note that by Theorem~\ref{theo:existandunique} such a point must be unique! We make this analysis below.

Let $p$ be an admissible parameter vector and let $Y^*=Y^*(p)$ denote the unique  $F$-stable equilibrium point of \eqref{gen}. It is easily to see that  the trivial equilibrium $\mathbf{0}$ is never $F$-stable, i.e. the origin is the extinction equilibrium. Thus, by \eqref{Ystar}
$$
0<Y_0^*=Y_0^*(p)\le S_2.
$$

 The identically zero coordinates of an equilibrium point is called its \textit{zero pattern}. It follows from the  structure properties of the matrix $A$ that \textit{if $p$ is admissible then  there can exist at most one point with a given zero pattern}. A simple inspection yields the following nontrivial equilibrium points:
\begin{equation}\label{Ediag}
\begin{array}{llllll}
E_2&=(S_2,&0,&0,&0&)\\
E_3&=(S_3,&(S_2-S_3)\frac{b}{K\alpha_1},&0,&0&)\\
E_4&=(S_4,&0,&(S_2-S_4)\frac{b}{K\alpha_2},&0&)\\
E_5&=(S_5,&0,&0,&(S_2-S_5)\frac{b}{K\alpha_3}&)\\
E_6&=(S_6,&(S_5-S_6)\frac{\alpha_3}{\eta_1},&0, &(S_6-S_3)\frac{\alpha_1}{\eta_1}&)\\
E_7&=(S_7,&0,&(S_5-S_7)\frac{\alpha_3}{\eta_2}, &(S_7-S_4)\frac{\alpha_2}{\eta_2}&)\\
E_8&=(S_8,&(S_8-S_7)\frac{b\eta_2}{K\Delta},
&(S_6-S_8)\frac{b\eta_1}{K\Delta} & (S_4-S_3)\frac{\alpha_1\alpha_2}{\Delta}&),\\
\end{array}
\end{equation}
where
$$
\Delta=\alpha_2\eta_1-\alpha_1\eta_2
$$
and $S_k=S_k(p):=(E_k)_0$ are the susceptible coordinates of the corresponding equilibrium state $E_k$  given respectively by
\begin{align}
S_3&=\sigma_1<S_4=\sigma_2<S_5=\sigma_3\label{e345}\\
(S_2-S_6)\frac{b}{K\alpha_1}&=(S_5-S_3)\frac{\alpha_3}{\eta_1}, \label{e6}\\
(S_2-S_7)\frac{b}{K\alpha_2}&=(S_5-S_4)\frac{\alpha_3}{\eta_2}, \label{e7}\\
S_8&=\frac{\delta}{\Delta}, 
\label{e8}\\
\delta&:=\mu_2\eta_1-\mu_1\eta_2.\nonumber
\end{align}
Note that modulo \eqref{e345}, the formulae \eqref{e6} and \eqref{e7} define explicitly $S_6$ and $S_7$ respectively.
We also  emphasize that $E_8$ exists (but maybe lie outside $\mathcal{E}(p)$) if and only if $\Delta=\alpha_2\eta_1-\alpha_1\eta_2\ne0$ (cf. with Corollary~\ref{cor:alpha}).

The equilibrium point $E_2$ is the disease free equilibrium, while the remaining equilibria $E_k$, $k\ge3$ are all endemic equilibria.

The above points $E_k$  (except for $E_8$) are well defined for all values of parameters, they can lie or not in $\R{4}_+$, but only one of them is $F$-stable. The latter, however, must be understood in the sense that for certain values of parameter $p$ it may happen that two different \textit{notations} $E_k$ coincide as points, for example $E_3(p)=E_6(p)$. We discuss this in more details below in Section~\ref{sec8}.

\begin{remark}\label{rem2}
Note that by \eqref{assum}
$$
\delta-\sigma_1\Delta=(\sigma_2-\sigma_1)\alpha_2\eta_1>0,
$$
in particular, $\delta$ and $\Delta$ cannot vanish simultaneously.
\end{remark}

Note also that the parameters $S_2,\ldots, S_8$ are dependent. On the other hand, we want to keep $S_2$ as a fundamental parameter of the model (the modified carrying capacity), and also consider $S_3,S_4$ and $S_5$ as the fundamental parameters satisfying the constraint \eqref{assum12}. Then it is convenient to think of $S_5,S_7$ and $S_8$ as depending on the first four fundamental parameters. It worthy to mention also that one has  from  \eqref{e6}-\eqref{e7} the following additional relations:
\begin{align}\label{e67}
\sigma_3\Delta-\delta=(S_5-S_8)\Delta&=(S_6-S_7)\frac{b\eta_1\eta_2}{\alpha_3 K},\\
\delta-\sigma_1\Delta=(S_8-S_3)\Delta&=(S_4-S_3)\eta_1\alpha_2,\label{e83}\\
\delta-\sigma_2\Delta=(S_8-S_4)\Delta&=(S_4-S_3)\eta_2\alpha_1.\label{e84}
\end{align}
\noindent
Note that these formulae are  well-defined even if $E_8$ does not exist (i.e. $\Delta=0$).

To study the $F$-stability we also write down the corresponding $F$-parts:
\begin{equation}\label{Fdiag}
\begin{array}{llllll}
F(E_2)&=(0,&(S_2-S_3)\alpha_1,&(S_2-S_4)\alpha_2, &(S_2-S_5)\alpha_3&)\\
F(E_3)&=(0,&0,&(S_3-S_4)\alpha_2,&(S_6-S_3)\frac{\alpha_3b\eta_1}{K\alpha_1}&)\\
F(E_4)&=(0,&(S_4-S_3)\alpha_1,&0,&(S_7-S_4)\frac{\alpha_3b\eta_2}{K\alpha_2}&)\\
F(E_5)&=(0,&(S_5-S_6)\frac{b\eta_1}{K\alpha_3}, &(S_5-S_7)\frac{b\eta_2}{K\alpha_3},&0&)\\
F(E_6)&=(0,&0,&(S_6-S_8)\frac{\Delta}{\eta_1}, &0&)\\
F(E_7)&=(0,&(S_8-S_7)\frac{\Delta}{\eta_2},&0, &0&)\\
F(E_8)&=(0,&0,&0 &0&)\\
\end{array}
\end{equation}

Using the obtained relation and the existence/uniqueness result, one may easily  by inspection to find which of the seven points $E_i$ is $F$-stable for a given $p$. It is rather trivial task for a concrete value of $p$, but, of course, an explicit description of $k(p)$, where $E_{k(p)}$ is $F$-stable, is a more nontrivial problem. Still, it is possible to get some simple conditions to outline the main idea.

\begin{proposition}\label{proE2}
The following $F$-stability conditions holds:
\begin{enumerate}
\item[(i)]the point $E_2$ is $F$-stable if and only if
$
S_2\le \sigma_1,
$
i.e. when the carrying capacity is small enough;
\item[(ii)]
the point $E_3$ is $F$-stable if and only if
$
S_6\le \sigma_1\le S_2;
$
\item[(iii)]
the point $E_5$ is $F$-stable if and only if
$$
\sigma_3\le \min\{S_2,S_6,S_7\};
$$
\item[(iv)]
the point $E_6$ is $F$-stable if and only if
\begin{align*}
(S_6-S_8)\Delta&\le0,\\
\sigma_1\le S_6&\le \sigma_3,
\end{align*}
\item[(v)]
the point $E_7$ is $F$-stable if and only if
\begin{align*}
(S_8-S_7)\Delta&\le0,\\
\sigma_2\le S_7&\le \sigma_3,
\end{align*}
\item[(vi)]
the point $E_8$ is $F$-stable if and only if
\begin{align*}
\Delta&>0,\\
\max\{0,S_7\}\le S_8&\le S_6
\end{align*}
\end{enumerate}
In the borderline cases (when some inequality becomes an equality), the corresponding equilibrium points coincide; for example, if $S_2=\sigma_1$ then $E_2=E_3$.
\end{proposition}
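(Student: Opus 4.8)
The plan is to reduce the whole statement to a direct inspection of the explicit tables \eqref{Ediag} and \eqref{Fdiag}. By definition, a point $E_k$ is $F$-stable exactly when two conditions hold simultaneously: (a) $E_k\in\mathcal{E}(p)$, i.e. all four coordinates of $E_k$ as listed in \eqref{Ediag} are nonnegative (so that $E_k$ is genuinely a nonnegative equilibrium); and (b) $F(E_k)\le0$, i.e. all four coordinates of $F(E_k)$ as listed in \eqref{Fdiag} are nonpositive. For each of the six points this is a finite system of affine inequalities in the quantities $S_2,\dots,S_8$, and the only work is to simplify it, using just the ordering $\sigma_1=S_3<\sigma_2=S_4<\sigma_3=S_5$ from \eqref{e345}, the defining relations \eqref{e6}, \eqref{e7}, the identities \eqref{e67}, \eqref{e83}, \eqref{e84}, and Remark~\ref{rem2}. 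Since both (a) and (b) are genuine ``iff'' characterizations, the resulting equivalences are automatically ``iff''.

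I would go through the six cases in order. For $E_2=(S_2,0,0,0)$: (a) is automatic since $S_2>0$, while (b) reads $S_2\le S_3$, $S_2\le S_4$, $S_2\le S_5$, which by \eqref{e345} collapses to $S_2\le\sigma_1$; this is (i). For $E_3$: (a) forces $S_2\ge S_3=\sigma_1$; in (b) the entry $(S_3-S_4)\alpha_2$ is $\le0$ automatically by \eqref{e345}, and the remaining entry gives $S_6\le S_3=\sigma_1$, so altogether $S_6\le\sigma_1\le S_2$, which is (ii). The same bookkeeping disposes of (iii)--(v): for $E_5$, (a) gives $S_2\ge\sigma_3$ and (b) gives $\sigma_3\le S_6$ and $\sigma_3\le S_7$, hence $\sigma_3\le\min\{S_2,S_6,S_7\}$; for $E_6$, (a) gives $\sigma_1\le S_6\le\sigma_3$ and the single nonzero entry of $F(E_6)$ gives $(S_6-S_8)\Delta\le0$; for $E_7$, (a) gives $\sigma_2\le S_7\le\sigma_3$ and $F(E_7)\le0$ gives $(S_8-S_7)\Delta\le0$.

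The point $E_8$ needs slightly more care, as it exists only when $\Delta\ne0$. Its last coordinate $(S_4-S_3)\frac{\alpha_1\alpha_2}{\Delta}$ is nonnegative iff $\Delta>0$ (using $S_4=\sigma_2>\sigma_1=S_3$). Assuming $\Delta>0$: nonnegativity of the second and third coordinates of $E_8$ is exactly $S_7\le S_8\le S_6$, while positivity of $S_8$ itself is automatic because $(S_8-S_3)\Delta=(S_4-S_3)\eta_1\alpha_2>0$ by \eqref{e83} together with $S_4>S_3$; and $F(E_8)=0\le0$ trivially. Hence $E_8$ is $F$-stable iff $\Delta>0$ and $\max\{0,S_7\}\le S_8\le S_6$, which is (vi) (the $0$ being redundant, but recorded to emphasize that $S_8$ is a genuine population coordinate). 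The borderline coincidences then follow by substituting an equality into \eqref{Ediag} and using the auxiliary relations: $S_2=\sigma_1=S_3$ makes $E_3=(\sigma_1,0,0,0)=E_2$; $S_6=S_3$ in \eqref{e6} gives $(S_2-S_3)\frac{b}{K\alpha_1}=(S_5-S_3)\frac{\alpha_3}{\eta_1}$, whence $E_6=E_3$; $S_6=S_8$ in \eqref{e67} and \eqref{e83} gives $E_8=E_6$; the others are handled identically.

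There is no genuine obstacle here — the proof is a bounded verification. The two things that require attention are: not dropping the nonnegativity constraints $E_k\in\mathcal{E}(p)$, since these are precisely what turn several of the conditions into \emph{lower} bounds on $S_2$, $S_6$, $S_7$; and, in case (vi), first extracting $\Delta>0$ from the fourth coordinate before reading off the rest, and invoking \eqref{e83} (equivalently Remark~\ref{rem2}) to see that $S_8>0$ is then automatic. Theorem~\ref{theo:existandunique} is not needed for the argument, but it is consistent with it: for any fixed admissible $p$ at most one of (i)--(vi) can be satisfied.
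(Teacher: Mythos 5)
Your proposal is correct and follows essentially the same route as the paper: $F$-stability is checked by direct inspection of the coordinate tables \eqref{Ediag} and \eqref{Fdiag}, using the ordering \eqref{e345} and the identities \eqref{e6}--\eqref{e84} (with the paper likewise extracting $\Delta>0$ from the last coordinate of $E_8$). Your treatment is merely more explicit in cases (iii)--(vi) and in the borderline coincidences, and you correctly observe that the uniqueness theorem, which the paper mentions in passing, is not actually needed for the equivalences.
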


\begin{proof}
First, it easily follows from \eqref{Fdiag} that $E_2\ge0$ always, while $F(E_2)\le0$ if and only if $S_i\ge S_2$ for all $i=3,4,5$. By the uniqueness of an $F$-stable point, this immediately implies (coming back to the $\sigma$-notation in \eqref{e345}) that (i) holds.
Similarly, $E_3\ge0$  if and only if  $S_2-S_3\ge0$, i.e. $S_2\ge \sigma_1$. On the other hand, since $(S_3-S_4)\alpha_2=(\sigma_1-\sigma_2)\alpha_2<0$, we see that $F(E_3)\le0$ is equivalent to inequality $S_6-S_3\le0$, i.e. $S_6\le \sigma_1$. This implies (ii). Analysis of (iii)-(v) is similar. Finally, analysis of $E_8$ reduces to the nonnegativity of its coordinates. The last coordinate must be nonnegative, hence (by virtue of $S_4-S_3=\sigma_2-\sigma_1>0$) we must have $\Delta>0$. This readily yields the desired inequalities.
\end{proof}

We summarize the above observations be some remarks. According to what was done before, we a priori know that the conditions of Proposition~\ref{proE2} are complementary to each other in the sense that they have no common (interior) points and give together the whole set of admissible parameters. This, however, is very difficult to see from the explicit defining inequalities. One reason for that is that the parameters $S_k$, $k=6,7,8$ are dependent on the fundamental parameters.

Also, it is not a priori clear that any of the conditions in Proposition~\ref{proE2} are realizable for some $p$. In fact, it is an elementary exercise to verify that any of the $E_k$, $k\in \{2,3,5,6,7,8\}$ may be realizable for some admissible $p$. The reader can easily verify this by expanding the explicit values for $S_k$, $k=6,7,8$ in the above inequalities, but we do not give these rather cumbersome expressions. Instead, a more important question is to study the dependence of the $F$-stable point on some distinguished parameters like tha carrying capacity $K$. We consider this problem in more details below in Section~\ref{sec8}.

Finally, as for many epidemiology models, the above results could also be interpreted as the threshold in terms of the basic reproduction number
$R_0$, which is usually defined as the average number of secondary infections produced when one infected individual is introduced into a host population where everyone is susceptible \cite{HDietz}. In the context of the present paper, the most natural definition of the basic reproduction number for a virus would be similar to that considered by Allet et al in \cite{Allen}. It follows also that there are additional threshold values which depend on the dynamics of the population size at the equilibrium values, see the discussion, cf. \cite[p.~198]{Allen}.

\section{The global stability}

Now connect the concept of the $F$-stability to the Lyapunov stabilty.
Recall that an equilibrium point $Y^*=(Y_0^*,Y_1^*,Y_2^*,Y_3^*)\in \mathcal{E}(p)$ is called $F$-stable if $Y_i^*\ge0$ and $F_i(Y^*)\le 0$ for any $0\le i\le 3$. An $F$-stable point $Y^*$ is said to be \textit{degenerate} if $Y_i^*=F_i(Y^*)=0$ for some $0\le i\le 3$. In other words, an equilibrium point $Y^*$ is degenerate if the \textit{total number} of nonzero coordinates of both $Y^*$ and $F(Y^*)$ is less than $4$.

The above terminology can be motivated by  the following observation. Given $Y^*\in \mathcal{E}(p)$, we associate the generalized Volterra function\cite{Plank}
$$
V_{Y^*}(y_0,y_1,y_2,y_3)=\sum_{i=0}^3 (y_i-Y_i^*\ln y_i).
$$
Then the time derivative of $V_{Y^*}$ along any integral trajectory of \eqref{gen} is given by
\begin{equation}\label{Fequa}
\frac{d}{dt}V_{Y^*}:= (\nabla V_{Y^*})^T\,\frac{dy}{dt}=-\frac{b}{K}(y_0-Y_0^*)^2+\sum_{i=0}^3 F_i(Y^*)y_i.
\end{equation}
Therefore, \textit{if $Y^*$ is an $F$-stable point of \eqref{gen} then it is Lyapunov stable}:
\begin{equation}\label{Lyap}
\frac{d}{dt}V_{Y^*}(y(t))\le 0.
\end{equation}

The following elementary observation is a useful tool to sort away certain $F$-stable points.
\begin{proposition}\label{rem1}
The equilibrium points $E_1=\mathbf{0}$  and $E_4$ are never $F$-stable.
\end{proposition}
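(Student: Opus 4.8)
The plan is to verify directly that neither $E_1=\mathbf{0}$ nor $E_4$ can satisfy the $F$-stability conditions $F_i(Y^*)\le 0$ for all $0\le i\le 3$, using the explicit formulas for $F(E_1)$ and $F(E_4)$ together with the admissibility assumption \eqref{assum}, i.e. $\sigma_1<\sigma_2<\sigma_3$, and the basic inequality \eqref{birthrel}, i.e. $b-\mu_0>0$.

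For $E_1=\mathbf{0}$, I would compute $F(\mathbf{0})=-q$, whose zeroth coordinate is $F_0(\mathbf{0})=b-\mu_0$. By \eqref{birthrel} this is strictly positive, so the condition $F_0(E_1)\le 0$ fails; hence $E_1$ is never $F$-stable. (This is precisely the remark already made in the text that the origin is the extinction equilibrium — it is unstable because the susceptible population grows from near zero.)

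For $E_4=(S_4,0,(S_2-S_4)\frac{b}{K\alpha_2},0)$, I would read off from \eqref{Fdiag} that
$$
F(E_4)=\Bigl(0,\ (S_4-S_3)\alpha_1,\ 0,\ (S_7-S_4)\tfrac{\alpha_3 b\eta_2}{K\alpha_2}\Bigr).
$$
The first coordinate of $F(E_4)$ in the display, namely $F_1(E_4)=(S_4-S_3)\alpha_1$, equals $(\sigma_2-\sigma_1)\alpha_1$ by \eqref{e345}, which is strictly positive since $\sigma_1<\sigma_2$ by admissibility \eqref{assum12} and $\alpha_1>0$. Therefore the condition $F_1(E_4)\le 0$ fails, so $E_4$ is never $F$-stable. (Conceptually: at $E_4$ only strain $2$ is present, but since strain $1$ is the more aggressive one — $\sigma_1<\sigma_2$ — an infinitesimal amount of strain $1$ will invade, so the state cannot be stable.)

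There is essentially no obstacle here: the proof is a one-line sign check in each of the two cases, and both signs are forced by the standing assumptions \eqref{birthrel} and \eqref{assum12}. The only point to be careful about is to use the already-established formula \eqref{Fdiag} for $F(E_4)$ (or, equivalently, to recompute $F_1(E_4)=\alpha_1 S_4-\mu_1=\alpha_1\sigma_2-\mu_1=\alpha_1(\sigma_2-\sigma_1)>0$ directly from \eqref{gen11} and the definition $S_4=\sigma_2$), and to note that $E_1$ is excluded for a completely different reason (positivity of $b-\mu_0$ rather than the strain ordering).
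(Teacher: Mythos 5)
Your proof is correct and is essentially identical to the paper's: the paper also verifies $F$-instability by the two sign checks $F_0(E_1)=b-\mu_0>0$ (from \eqref{birthrel}) and $F_1(E_4)=\alpha_1(\sigma_2-\sigma_1)>0$ (from \eqref{assum12}). Nothing further is needed.
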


\begin{proof}
Indeed, $F(E_1)_1=b-\mu_0>0$ and $(F(E_4))_2=\alpha_1(\sigma_2-\sigma_1)>0.$
\end{proof}

Our  principal result establishes the existence and uniqueness of an $F$-stable point.

\begin{theorem}
\label{th1}
The $F$-stable point $Y^*(p)$ is globally stable, i.e. $Y(t)\to Y^*(p)$ as $t\to \infty$ for any solution of \eqref{model} with initial data \eqref{initdata}. Furthermore,
$$
0<\min\{S_2,\sigma_1\} \le Y_0^*(p)\le \min\{S_2,\sigma_3\}.
$$
In particular, $Y_0^*(p)\le \sigma_3$ with the equality if and only if $Y^*(p)=E_5$.
\end{theorem}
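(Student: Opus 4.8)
The plan is to use the generalized Volterra function $V_{Y^*}$ attached to the unique $F$-stable equilibrium $Y^*=Y^*(p)$ (which exists by Theorem~\ref{theo:existandunique}; by Proposition~\ref{rem1} it is one of $E_2,E_3,E_5,E_6,E_7,E_8$) as a global Lyapunov function and then to invoke the LaSalle invariance principle. The dissipation identity $\frac{d}{dt}V_{Y^*}=-\frac bK(y_0-Y_0^*)^2+\sum_{i=0}^3 F_i(Y^*)y_i\le 0$ is already at hand in \eqref{Fequa}--\eqref{Lyap}. First I would record two preliminary facts: every trajectory of \eqref{model} with data \eqref{initdata} stays in $\mathrm{int}(\R{4}_+)$ for $t>0$, and it is bounded — boundedness of the susceptible coordinate follows from the logistic form of the first equation exactly as in \eqref{bmu1}, and then boundedness of $I_1,I_2,I_{12}$ follows from the linear decay terms $-\mu_iI_i$ as in \eqref{estim1}. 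Since for every index $i$ with $Y_i^*>0$ the summand $y_i-Y_i^*\ln y_i$ of $V_{Y^*}$ tends to $+\infty$ as $y_i\to 0^+$ while $V_{Y^*}$ is nonincreasing along the bounded trajectory, the corresponding coordinate stays bounded away from $0$; hence the $\omega$-limit set of the trajectory is a nonempty compact connected invariant set on which $V_{Y^*}$ is $C^1$.

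By LaSalle, the $\omega$-limit set lies in the largest invariant subset $\mathcal M$ of $Z:=\{y:\frac{d}{dt}V_{Y^*}(y)=0\}$. From the dissipation identity, $Z$ is exactly the set on which $y_0=Y_0^*$ and $F_i(Y^*)y_i=0$ for $0\le i\le 3$; setting $I_-:=\{i\in\{1,2,3\}:F_i(Y^*)<0\}$ and $I_0:=\{i\in\{1,2,3\}:F_i(Y^*)=0\}$, this means $y_0\equiv Y_0^*$ and $y_i\equiv 0$ for $i\in I_-$. Along a trajectory contained in $\mathcal M$ the first equation of \eqref{gen} gives $0=y_0'=F_0(y)\,Y_0^*$ with $Y_0^*>0$, so $F_0(y(t))\equiv 0$; together with $y_i\equiv 0$ $(i\in I_-)$ this forces the trajectory onto the coordinate face where the $I_-$-components vanish and onto the affine relation $\sum_{i\in I_0}\alpha_iy_i=(b-\mu_0)-\frac bK Y_0^*$, i.e. onto the balance relation \eqref{law1} evaluated at $y$.

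The crux is to deduce $\mathcal M=\{Y^*\}$, and this is the step I expect to be the main obstacle. When $Y^*$ is non-degenerate, the complement of $I_-$ in $\{1,2,3\}$ is the support of $Y^*$, so on $\mathcal M$ the surviving coordinates $(y_i)_{i\in I_0}$ solve, besides \eqref{law1}, the reduced Lotka--Volterra subsystem obtained from \eqref{gen} by freezing $y_0=Y_0^*$ and deleting the $I_-$-components; whenever $|I_0|\le 2$ the linear independence of the balance relations \eqref{law1}--\eqref{law2} under \eqref{assum12}, combined with the fact that at most one point with a prescribed zero pattern can be an equilibrium (the structural consequence of Lemma~\ref{lem:minors}, see Corollary~\ref{cor:alpha}), leaves $\{Y^*\}$ as the only invariant set. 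The delicate remaining case is $Y^*=E_8$ in $\mathrm{int}(\R{4}_+)$ (all $F_i(E_8)=0$): here $\mathcal M$ lies in the two-dimensional slice $\{y_0=S_8,\ \sum\alpha_iy_i=c\}$ on which the reduced flow is the planar restriction of \eqref{gen}, and one rules out a nontrivial invariant set by noting that on this slice the frozen $(I_1,I_2,I_{12})$-subsystem has skew-symmetric interaction part so that $\sum_{i\ge1}(y_i-Y_i^*\ln y_i)$ is a first integral, while it is strictly convex with unique minimum at $E_8$ — so its level set inside the slice through a point of $\mathcal M$ is either $\{E_8\}$ (and we are done) or a closed curve encircling $E_8$, the latter being excluded by a Bendixson--Dulac type argument (or by a small admissible perturbation of $p$ making $Y^*(p)$ non-degenerate and passing to the limit by continuity of $p\mapsto Y^*(p)$). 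Degenerate borderline cases ($Y_i^*=F_i(Y^*)=0$) are handled the same way after using that staying in $\mathcal M$ forces $\frac{d}{dt}F_0(y)\equiv 0$, i.e. $\sum_{i\in I_0}\alpha_iF_i(y)y_i\equiv 0$. Once $\mathcal M=\{Y^*\}$ the $\omega$-limit set is $\{Y^*\}$, which is the claimed global convergence.

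Finally, the bounds on $Y_0^*(p)$ follow from Lemma~\ref{lem:equi}: if $Y^*(p)=E_2$ then $Y_0^*=S_2$ and Proposition~\ref{proE2}(i) gives $S_2\le\sigma_1<\sigma_3$, so $Y_0^*=\min\{S_2,\sigma_1\}$ and $Y_0^*=S_2<\sigma_3$; if $Y^*(p)\ne E_2$ then \eqref{Kineq} gives $\sigma_1\le Y_0^*\le\min\{S_2,\sigma_3\}$, hence $0<\min\{S_2,\sigma_1\}\le Y_0^*\le\min\{S_2,\sigma_3\}$ in all cases. For the last assertion, $Y^*=E_5$ gives $Y_0^*=S_5=\sigma_3$ at once by \eqref{e345}; conversely, if $Y_0^*=\sigma_3$ then equality holds in the upper bound of \eqref{Kineq}, and since the maximum in \eqref{YY0} equals $\sigma_3=\max\{\sigma_1,\sigma_2,\sigma_3\}$ with the inequalities $\sigma_1<\sigma_2<\sigma_3$ strict, it is attained only when $Y_1^*=Y_2^*=0$, so $Y^*$ has the zero pattern of $E_5$; uniqueness of the point with a given zero pattern then forces $Y^*=E_5$.
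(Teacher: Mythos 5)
Your overall LaSalle strategy is reasonable and your final paragraph (the bounds on $Y_0^*(p)$ via Lemma~\ref{lem:equi}, \eqref{YY0} and Proposition~\ref{proE2}, and the equality case $E_5$ via the zero pattern) is correct, but there is a genuine gap exactly at the step you yourself identify as the crux, namely when at least two of the $F_j(Y^*)$ vanish, in particular $Y^*(p)=E_8$. On the slice $\{y_0=Y_0^*,\ \sum_{i\ge1}\alpha_iy_i=c\}$ the frozen flow is, as you show, conservative: $\sum_{i\ge1}(y_i-Y_i^*\ln y_i)$ is a first integral whose level sets near $E_8$ are closed curves. A Bendixson--Dulac argument cannot exclude closed orbits of such a flow (Dulac needs a sign-definite divergence, which a system with a regular first integral does not have), and indeed the paper's Section~\ref{H.C.C} shows that for $K=\infty$ the $\omega$-limit sets \emph{are} closed curves, so any argument ruling them out must use more than the frozen planar structure. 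Your fallback -- perturb $p$ to make $Y^*(p)$ ``non-degenerate'' and pass to the limit by continuity of $p\mapsto Y^*(p)$ -- does not work either: the interior point $E_8$ is already nondegenerate, and, more importantly, global attractivity for nearby parameter values does not transfer to the fixed parameter $p$ by continuity of the equilibrium map (one would need uniformity of the convergence in $p$, which is precisely what is unproved). A related, smaller conflation occurs in your $|I_0|\le 2$ cases: the balance relation \eqref{law2} is an identity at equilibria (obtained by summing the equations at a zero of the field) and does not hold along non-equilibrium trajectories in the invariant set, so ``linear independence of \eqref{law1}--\eqref{law2}'' cannot be invoked there as stated; one must instead differentiate the constraint $F_0(y)\equiv0$ along the flow.

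The missing ingredient, and the way the paper closes this case, is precisely that cascade of constraints: since $y_0(t)\to Y_0^*\ne0$ and all derivatives of $y_0$ are bounded, the higher derivatives of $y_0$ tend to zero (Corollary~\ref{cor:Lpbis}), which yields on the $\omega$-set not only the linear relation $G(y)=\frac bK(S_2-Y_0^*)$ but also the quadratic relation $H(y)=\sum_j\alpha_jy_jF_j(y)=0$ as in \eqref{Jeq}; the paper then shows (via the divisibility argument excluding $\phi\mid H$) that $\{G=c\}\cap\{H=0\}$ is a line or a conic, and intersecting with the transcendental level set $V_{Y^*}=C$ leaves finitely many points, so the connected $\omega$-set is a single point, which must be $Y^*$. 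Note also that the paper does not use LaSalle at all: it derives $Y_0(t)\to Y_0^*$ and $Y_i(t)\to0$ for $F_i(Y^*)<0$ directly from the $L^2$/$L^1$ bounds produced by integrating \eqref{Fequa} together with the Landau--Kolmogorov-type Lemmas~\ref{lemma1} and \ref{lem:Lpbis} of the Appendix. Your proposal could be repaired by importing the relation $H=0$ (which you mention only for the borderline degenerate cases) into the $E_8$ and two-index cases and then running the paper's finiteness/transcendence argument, but as written the exclusion of closed invariant curves is unjustified.
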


\begin{remark}
Note, however, that an explicit representation and the zero pattern of the corresponding $F$-stable point $Y^*(p)$ depends in a tricky way on the fundamental parameter $p$. The proof of the global stability makes an essential use of the fact that $Y_0(t)$ has a nonzero limit value. This allows us to obtain  nontrivial  first integrals which reduce the dimension of the $\omega$-limit set to $0.$
\end{remark}

\begin{remark}\label{reminf}
The asymptotic behaviour of \eqref{model} maybe, however, rather complex if $K=\infty$ and will be treated somewhere else. Note also that if $K=\infty$, the system \eqref{model} is no longer semi-definite but it has the pure skew-symmetric structure instead. More precisely, the matrix $A$ in \eqref{gen11} is skew-symmetric and can be thought of as a  perturbation of the decomposable matrix
$$
B=  \left(
 \begin{array}{cccc}
   0&-\alpha_1&0&0 \\
   \alpha_1&0&0&0\\
   0&0&0&-\eta_2 \\
   0&0&\eta_2&0
 \end{array}
 \right).
$$
The dynamic of perturbed Lotka-Volterra systems obtained by perturbation of $B$ can be very complex and contain nontrivial attractors in $\R{4}$, as the recent results of \cite[Part II]{Duarte98} show.
\end{remark}

We begin by proving some auxiliary statements.

\begin{proposition}
If $Y(t)$ is a solution of \eqref{gen} satisfying \eqref{initdata} then
	\begin{equation}\label{colo}
	Y_0(t) \leq \left(\frac{1}{S_2}(1-e^{-(b-\mu_0)t})+\frac{1}{Y_0(0)}e^{-(b-\mu_0)t}\right)^{-1}.
	\end{equation}
	In particular,
	\begin{equation}\label{boundr1}
	 Y_0(t)\leq \max\{ S_2,Y_0(0)\}
	\end{equation}
	and
	\begin{equation}\label{limit}
	\limsup_{t\rightarrow \infty} Y_0(t)\leq  S_2.
	\end{equation}
\end{proposition}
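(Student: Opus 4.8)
The plan is to derive the differential inequality for $Y_0(t)$ directly from the first equation of \eqref{model}, integrate it explicitly via the standard Bernoulli/logistic substitution, and then read off the two corollaries \eqref{boundr1} and \eqref{limit} from the explicit formula \eqref{colo}. First I would observe that since all coordinates $Y_i(t)$ stay nonnegative for $t\ge 0$ (as recalled after \eqref{initdata}), the terms $-\alpha_1 I_1-\alpha_2 I_2-\alpha_3 I_{12}$ in the first equation are $\le 0$, so
\begin{equation*}
Y_0'=\left(b\Bigl(1-\frac{Y_0}{K}\Bigr)-\alpha_1Y_1-\alpha_2Y_2-\alpha_3Y_3-\mu_0\right)Y_0\le \left(b-\mu_0-\frac{b}{K}Y_0\right)Y_0.
\end{equation*}
Writing $\beta:=b-\mu_0>0$ (positive by \eqref{birthrel}) and noting $\beta/(b/K)=K(1-\mu_0/b)=S_2$, this is $Y_0'\le \frac{b}{K}Y_0(S_2-Y_0)$.

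Next I would linearize by the substitution $u(t):=1/Y_0(t)$, which is legitimate since $Y_0(t)>0$ for all $t\ge 0$ (again from the invariance of the positive cone, or from $Y_0(0)>0$ and the form of the equation). Then $u'=-Y_0'/Y_0^2\ge -\frac{b}{K}(S_2-Y_0)/Y_0=-\frac{b}{K}(S_2 u-1)$, i.e.
\begin{equation*}
u'+\frac{bS_2}{K}\,u\ge \frac{b}{K}.
\end{equation*}
Since $bS_2/K=\beta=b-\mu_0$, multiplying by the integrating factor $e^{(b-\mu_0)t}$ and integrating from $0$ to $t$ gives $u(t)e^{(b-\mu_0)t}-u(0)\ge \frac{b/K}{b-\mu_0}(e^{(b-\mu_0)t}-1)=\frac{1}{S_2}(e^{(b-\mu_0)t}-1)$, which rearranges to
\begin{equation*}
\frac{1}{Y_0(t)}=u(t)\ge \frac{1}{S_2}\bigl(1-e^{-(b-\mu_0)t}\bigr)+\frac{1}{Y_0(0)}e^{-(b-\mu_0)t}.
\end{equation*}
Taking reciprocals (both sides positive) yields exactly \eqref{colo}.

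Finally, the two consequences are immediate from \eqref{colo}: the right-hand side is a convex combination (weights $1-e^{-(b-\mu_0)t}$ and $e^{-(b-\mu_0)t}$ summing to $1$) of $1/S_2$ and $1/Y_0(0)$, hence is $\ge \min\{1/S_2,1/Y_0(0)\}$, so $Y_0(t)\le \max\{S_2,Y_0(0)\}$, which is \eqref{boundr1}; and as $t\to\infty$ the weight $e^{-(b-\mu_0)t}\to 0$, so the bracket tends to $1/S_2$ and thus $\limsup_{t\to\infty}Y_0(t)\le S_2$, which is \eqref{limit}. I do not anticipate a serious obstacle here; the only points requiring a word of care are justifying $Y_0(t)>0$ for all $t$ before dividing (covered by the cited invariance) and checking the direction of the inequality is preserved under the reciprocal (fine since both sides are strictly positive for all finite $t$). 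The substitution $u=1/Y_0$ is the one genuinely non-mechanical idea, but it is the textbook trick for logistic-type inequalities.
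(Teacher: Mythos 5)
Your proof is correct and follows essentially the same route as the paper: both drop the nonnegative infection terms to get the logistic differential inequality $Y_0'\le (b-\mu_0)Y_0-\frac{b}{K}Y_0^2$ and then integrate it by the reciprocal (Bernoulli) substitution combined with the integrating factor $e^{(b-\mu_0)t}$, arriving at the same lower bound for $1/Y_0(t)$. The only difference is cosmetic ordering of the two steps, plus your explicit convex-combination argument for \eqref{boundr1} and \eqref{limit}, which the paper leaves as ``direct consequences.''
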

\begin{proof}
It follows from the first equation of \eqref{gen} that
\begin{equation*}
Y_0'-(b-\mu_0)Y_0 \leq-\frac{bY_0^2}{K},
\end{equation*}
which can be written as
\begin{equation*}
(Y_0e^{-(b-\mu_0)t})' \leq - \frac{b}{K} e^{-(b-\mu_0)t}Y_0^2.
\end{equation*}
Integrating the latter inequality gives
\begin{equation*}
\frac{e^{(b-\mu_0)t}}{Y_0} \geq \frac{b}{K(b-\mu_0)} (e^{(b-\mu_0)t}-1) + \frac{1}{Y_0(0)},
\end{equation*}
which proves \eqref{colo}. Relations \eqref{boundr1} and \eqref{limit} are direct consequences of \eqref{colo}.
 \end{proof}

 \begin{proposition}\label{pro:Ybound}
 	If $Y(t)$ is a solution of \eqref{gen} with \eqref{initdata} then
 	\begin{equation}
 	\sum_{i=0}^3Y_i(t) \leq \max\{\sum_{i=0}^3Y_i(0), \,\frac{Kb}{4\hat\mu}\}
 	\end{equation}
 	for $t \geq 0$, where $\hat\mu:=\min\{\mu_0,\mu_1,\mu_2,\mu_3\}$. In particular, any solution of \eqref{gen} with initial data  \eqref{initdata} is bounded.
 \end{proposition}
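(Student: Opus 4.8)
The plan is to bound the total population $\Sigma(t):=\sum_{i=0}^3 Y_i(t)$ by summing the first four equations of \eqref{model} and controlling the resulting scalar differential inequality. First I would add the four equations of \eqref{gen}. All the cross terms involving $\alpha_i$ and $\eta_i$ appear in pairs with opposite signs (the $\alpha_i$-terms cancel between the $S$-equation and the $I_i$-equations, and the $\eta_i$-terms cancel between the $I_i$-equations and the $I_{12}$-equation), so they all cancel; this is exactly the skew-symmetry of the off-diagonal part of $A$ noted after \eqref{gen11}. What survives is
\begin{equation*}
\Sigma' = b\Bigl(1-\frac{Y_0}{K}\Bigr)Y_0 - \mu_0 Y_0 - \mu_1 Y_1 - \mu_2 Y_2 - \mu_3 Y_3 \le \frac{b}{K}(K-Y_0)Y_0 - \hat\mu\,\Sigma,
\end{equation*}
where $\hat\mu=\min\{\mu_0,\mu_1,\mu_2,\mu_3\}$ and I used $\mu_i\ge\hat\mu$ for each $i$.

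Next I would estimate the quadratic term $\frac{b}{K}(K-Y_0)Y_0$. Since $Y_0\ge 0$ along trajectories with data \eqref{initdata}, the function $t\mapsto \frac{b}{K}(K-t)t$ on $[0,K]$ attains its maximum $\tfrac{bK}{4}$ at $t=K/2$; and for $Y_0>K$ the term is negative, hence certainly $\le \tfrac{bK}{4}$. Therefore for all $t\ge0$,
\begin{equation*}
\Sigma'(t) \le \frac{bK}{4} - \hat\mu\,\Sigma(t).
\end{equation*}
This is a linear differential inequality. Multiplying by the integrating factor $e^{\hat\mu t}$ gives $(\Sigma e^{\hat\mu t})' \le \tfrac{bK}{4}e^{\hat\mu t}$, and integrating from $0$ to $t$ yields $\Sigma(t) \le \tfrac{bK}{4\hat\mu}(1-e^{-\hat\mu t}) + \Sigma(0)e^{-\hat\mu t}$. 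Since the right-hand side is a convex combination of $\Sigma(0)$ and $\tfrac{bK}{4\hat\mu}$ (the coefficients $e^{-\hat\mu t}$ and $1-e^{-\hat\mu t}$ are nonnegative and sum to $1$), it is bounded above by $\max\{\Sigma(0),\tfrac{bK}{4\hat\mu}\}$, which is the claimed bound. Boundedness of each $Y_i$ then follows immediately since $0\le Y_i(t)\le \Sigma(t)$, using that the nonnegative cone is invariant (stated after \eqref{initdata}).

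I do not anticipate a serious obstacle here; the only point requiring a little care is the cancellation of all the interaction terms, which must be verified coordinate by coordinate from \eqref{model}, and the elementary maximization of the logistic term $\frac{b}{K}(K-Y_0)Y_0$ over $Y_0\ge 0$. One should also note that the differential inequality argument is legitimate because $\Sigma$ is $C^1$ along solutions and the comparison is with the explicit solution of the associated linear ODE; a standard Gronwall-type comparison closes it.
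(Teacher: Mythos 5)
Your proof is correct and coincides with the paper's own argument: both sum the four equations of \eqref{gen}, use the cancellation of the interaction terms to reduce to $f'(t)\le \frac{b}{K}(K-Y_0)Y_0-\hat\mu f(t)\le \frac{bK}{4}-\hat\mu f(t)$, and integrate the resulting linear differential inequality. The only difference is that you spell out the integrating-factor and convex-combination step that the paper compresses into ``by integrating the above equation we obtain the desired inequality.''
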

\begin{proof}
Summing up all equations of \eqref{gen} gives
\begin{equation*}
\begin{split}
\frac{d}{dt}\sum_{i=0}^3Y_i(t)&=\frac{b}{K}(S_2-Y_0)Y_0- \sum_{i=1}^3\mu_iY_i(t)\\
&\leq \frac{b}{K}(K-Y_0)Y_0-\hat\mu \sum_{i=0}^3Y_i(t).
\end{split}
\end{equation*}
Setting  $f(t)=\sum_{i=0}^3Y_i(t)$ we find $f'(t) \leq \frac{bK}{4}-\hat\mu f(t).$ By integrating the above equation  we obtain the desired inequality.
\end{proof}

\begin{proof}[Proof of Theorem~\ref{th1}]
According to Theorem~\ref{theo:existandunique} there exists a unique $F$-stable point, we denote it by $Y^*$. Let $Y(t)$ be any solution of \eqref{gen} with initial conditions \eqref{initdata}. First note that by \eqref{Fequa}, the Volterra function $V_{Y^*}(t)$ is nonincreasing for all $t\ge0$, therefore
\begin{equation}\label{ineqV0}
V_{Y^*}(Y(t))\le V_{Y^*}(0).
\end{equation}
On the other hand, $V_{Y^*}(t)$ is a priori bounded from below. Indeed, it is easily verified that the function of one variable $\psi_a(x)=x-a\ln x$ is decreasing in $(0,a)$ and increasing for $x\in (a,\infty)$, thus
$$
\psi_a(x)=x-a\ln x\ge \psi_a(a)=a-a\ln a, \qquad x\in (0,\infty)
$$
(where the above inequality also holds in the limit case $a=0$). It follows that
\begin{equation}\label{ineqV1}
V_{Y^*}(y)\ge V_{Y^*}(Y^*),
\end{equation}
and the equality holds if and only if $y=Y^*$. Thus, $V_{Y^*}(Y(t))$ is uniformly bounded in $\R{4}_+$. Coming back to  \eqref{Fequa}, note that by our choice of $Y^*$, all $F_i(Y^*)\le0$ and also $Y_i\ge0$, therefore for any $T>0$:
$$
\int_{0}^T\frac{b}{K}(Y_0(t)-Y_0^*)^2\,dt+\sum_{i=0}^3 |F_i(Y^*)| \int_{0}^T|Y_i(t)|\,dt=V_{Y^*}(Y(0))-V_{Y^*}(Y(T)).
$$
This immediately implies by the uniform boundedness of $V_{Y^*}(Y(t))$ that

\begin{itemize}
\item[(a)]
the function $Y_0(t)-Y_0^*\in L^2([0,\infty))$;
\item[(b)]
if $F_i(Y^*)\ne0$ (i.e. $F_i(Y^*)<0$) then the function $Y_i(t)\in L^1([0,\infty))$.
\end{itemize}

We have for any $0\le i\le 3$ that $Y_i(t)\ge0$  and by Proposition~\ref{pro:Ybound} $Y_i(t)\le M_i<\infty$ for all $t\ge0$. Therefore, it follows from  \eqref{gen} that for each fixed $i$ the derivative $Y_i'(t)$ is uniformly bounded on $[0,\infty)$. Differentiating any equation in \eqref{gen}, we conclude by induction that
\begin{equation}\label{conclude}
\text{\textit{for any $0\le i\le 3$, all derivatives $Y_i'(t),Y_i''(t),\ldots,Y_i^{(k)}(t),$ of any order $k\ge1$ are unifromly bounded in $[0,\infty)$}.}
\end{equation}

Combining \eqref{conclude} and (a) with Lemma~\ref{lem:Lpbis} in Appendix, we conclude that  the following limit exists:
\begin{equation}\label{Y0lim}
\lim_{t\to\infty}Y_0(t)=Y_0^*.
\end{equation}
Recall also that since $Y_0^*\ne0$ then
\begin{equation}\label{FFF}
F_0(Y^*)=0.
\end{equation}
Next, let $I$ denote the subset of $\{1,2,3\}$ such that  $F_i(Y^*)\ne0$ for some $i\in I$. Then by \eqref{FY}, $Y^*_i=0$, and, on the other hand by (b) we have $Y_i(t)\in L^1([0,\infty))$. Applying Lemma~\ref{lemma1} we find
\begin{equation}\label{Yilim}
\lim_{t\to\infty}Y_i(t)=0=Y_i^*, \qquad i\in I.
\end{equation}

It remains to establsh that $Y_j(t)$ converges also for $j\in J=\{1,2,3\}\setminus I$. Alternatively,
$$
J=\{j\ge1:F_j(Y^*)=0\}.
$$
Arguing as above and combining \eqref{conclude} with Corollary~\ref{cor:Lpbis}, we obtain
\begin{equation}\label{Y0k}
\lim_{t\to\infty} \frac{d^k}{dt^{k}}Y_0(t)=0\quad \text{ for any } k=0,1,2,\ldots.
\end{equation}
We have by \eqref{Y0k} that
\begin{equation}\label{diff1}
\lim_{t\to\infty} \frac{d}{dt}Y_0(t)=\lim_{t\to\infty}Y_0(t)F_0(Y(t))=0.
\end{equation}
Since $\lim_{t\to\infty}Y_0(t)=Y_0^*\ne0$, we obtain that
\begin{equation}\label{F0Yt}
\lim_{t\to\infty}Y_0(t)F_0(Y(t))=0,
\end{equation}
hence
\begin{equation}\label{diff2}
\lim_{t\to\infty} \sum_{i=1}^3\alpha_i Y_i(t)=\frac{b}{K}(S_2-Y_0^*).
\end{equation}
Since we also know that \eqref{Yilim} holds true for any $i\in I$, we my simplify \eqref{diff2} to obtain
\begin{equation}\label{diff2j}
\lim_{t\to\infty} \sum_{j\in J}\alpha_j Y_j(t)=\frac{b}{K}(S_2-Y_0^*).
\end{equation}
On the other hand, since $V_{Y^*}(Y(t))$ is nonincreasing and bounded, we similarly obtain
\begin{equation}\label{diff3}
\lim_{t\to\infty} \sum_{j\in J} (Y_j(t)-Y_j^*\ln Y_j(t))=C,
\end{equation}
where $C$ is some real constant.

To proceed, we iterate \eqref{Y0k} by virtue of \eqref{gen}. For example, the second derivative is obtained by
\begin{align*}
\frac{d^2}{dt^2}Y_0(t)&=\frac{d}{dt}(Y_0(t)F_0(Y(t))=Y_0'F_0(Y)+Y_0 \sum_{i=0}^3\frac{\partial F_0}{\partial Y_i}Y'_i(t)\\
&=Y_0\left(F_0^2(Y)+ \sum_{i=0}^3 Y_iF_i\frac{\partial F_0}{\partial Y_i}\right)=Y_0\mathcal{L}(F_0),
\end{align*}
where $\mathcal{L}$ is a Riccati type operator
$
\mathcal{L}(g)=g^2+ \sum_{i=0}^3 Y_iF_i\frac{\partial g}{\partial Y_i}.
$
Then \eqref{Y0k} and \eqref{Y0lim} imply that
$$
\lim_{t\to\infty} \mathcal{L}^k(F_0)(Y(t))=0, \quad \text{for all $k=0,1,2,\ldots$}
$$
For example, $k=1$ yields by virtue of \eqref{Y0lim}, \eqref{Yilim}, \eqref{FFF} and \eqref{F0Yt} that
\begin{equation}\label{Jeq}
\lim_{t\to \infty}\sum_{j\in J} \alpha_j Y_j(t)F_j(Y(t))=0.
\end{equation}

Next, note that if the cardinality of $J$  is exactly one then the left hand side of \eqref{diff2j} contains only one term, thus implying  the convergence of the corresponding $J$-coordinate. Therefore, we may assume without loss of generality that $J$ contains at least two indices.

We consider the two cases.

\textbf{Case 1.} Let $J$ be maximal possible, i.e. $J=\{1,2,3\}$. Then it must be
$$
F(Y^*)=(0,0,0,0).
$$
we have from \eqref{diff2}, \eqref{Y0lim}, \eqref{Jeq}, \eqref{diff3} and explicit expressions for $F_i$ that
 \begin{align}\label{Jeq1}
\lim_{t\to \infty}G(Y(t))&=\frac{b}{K}(S_2-Y_0^*),\\
\lim_{t\to \infty}H(Y(t))&=0,\label{Jeq2}\\
\lim_{t\to \infty}V_{Y^*}(Y(t))&=C,\label{Jeq3}
\end{align}
where $G(y)=\sum_{i=1}^3\alpha_iy_i,$ and
$
H(y)=\sum_{i=1}^3\alpha_i c_i y_i+y_1y_3(\alpha_3-\alpha_1)\eta_1 +y_2y_3(\alpha_3-\alpha_2)\eta_2$, and $c_i=\alpha_i (Y_0^*-\sigma_i)$.
Then \eqref{Jeq1}--\eqref{Jeq3} implies that the $\omega$-set of the trajectory $Y(t)$ is a subset of the variety defined by
\begin{align}\label{Jeq10}
G(y_1,y_2,y_3)&=\frac{b}{K}(S_2-Y_0^*),\\
H(y_1,y_2,y_3)&=0,\label{Jeq20}\\
V_{Y^*}(y_1,y_2,y_3)&=C,\label{Jeq30}
\end{align}
We claim that the latter system has only finitely many solutions in $\R{3}_+$. Indeed, the left hand sides of \eqref{Jeq1}--\eqref{Jeq2}  are algebraic polynomials of degree 1 and at most 2 respectively.  Therefore, \eqref{Jeq1}--\eqref{Jeq2} defines either a curve of order two, or a line, or a plane. The latter is, however, possible only if the linear form $\phi:=\sum_{i=1}^3\alpha_iy_i-\frac{b}{K}(S_2-Y_0^*)$ divides $H$. Let us show that the latter is impossible. Indeed, suppose that $\phi$ divides $H$, then must exist a linear function $P$ of $y$ such  that
$$
\sum_{i=1}^3\alpha_i c_i y_i+y_1y_3(\alpha_3-\alpha_1)\eta_1 +y_2y_3(\alpha_3-\alpha_2)\eta_2= (\sum_{i=1}^3\alpha_iy_i-\frac{b}{K}(S_2-Y_0^*))P.
$$
On substitution $y_1=y_2=0$ into the latter identity we obtain
$$
\alpha_3c_3 y_3= (\alpha_3y_3-\frac{b}{K}(S_2-Y_0^*))P(0,0,y_3),
$$
therefore $S_2=Y_0^*$. But we know by Lemma~\ref{lem:equi} that the latter holds if and only if $Y^*=E_2=(S_2,0,0,0)$ in which case we have
$$
F(Y^*)=F(E_2)=(0,(S_2-\sigma_1)\alpha_1,(S_2-\sigma_2)\alpha_2, (S_2-\sigma_3)\alpha_3),
$$
see \eqref{Fdiag}. But by \eqref{assum} there exist at least two nonzero coordinates in $F(Y^*)$, a contradiction with the initial assumption. This proves that \eqref{Jeq1}--\eqref{Jeq2} define either a curve of order two or a straight line. Next, since at least one of $Y^*_i$ is nonzero for $i\ge1$ (because $Y^*\ne E_2$), it follows that eq.~\eqref{Jeq30} is transcendent (contains a logarithm). A simple argument show that in that case \eqref{Jeq1}--\eqref{Jeq3} must have at most finitely many (more precisely, $\le 6$) solutions. Thus the $\omega$-set is finite, implying for continuity reasons that the $\omega$-set is a point, i.e. all three limits $\lim_{t\to\infty}Y_i(t)$, $1\le i\le 3$, must exist. Then a standard argument reveals that the only possibility here is that the limit point is $Y^*$.

\textbf{Case 2.}  It remains to consider the case when the cardinality is exactly two, i.e. $J$ is obtained by eliminating some index $i\in\{1,2,3\}$. Write this as $J=\{j,k\}$ such that $\{1,2,3\}=\{i,j,k\}$. By the made assumption, $F_i(Y^*)\ne0$, $\lim_{t\to\infty}Y_i(t)=0=Y_i^*$, and
$$
F_j(Y^*)=F_k(Y^*)=0.
$$
Again, eliminating the trivial case $Y^*=E_2$ we may assume that at least one of coordinates, say $Y^*_j$, is nonzero. This implies that $V_{Y^*}(y_j,y_k)$ is a transcendent function. Repeating the argument in Case~1 we again arrive to the finiteness of the $\omega$-set, implying the convergence of $Y(t)$ to $Y^*$. The theorem is proved completely.



\end{proof}

\section{Transition dynamics of an $F$-stable point}\label{sec8}
From the biological point of view, it is important to know how the dynamics of the $F$-stable equilibrium point $Y^*(p)$ depends on the fundamental parameter $p\in \R{11}$. We have the following general result.

\begin{theorem}\label{th:cont}
The map $p\to Y^*(p)$ is continuous for any admissible $p$. Furthermore, for any continuous  perturbation of the fundamental parameter $p$ (keeping $p$ admissible), the $F$-stable nondegenerate point $Y^*(p)=E_{k(p)}$ may change its index $k(p)$ only along the edges of the graph $\Gamma$ drawn in Fig.~\ref{graph}.
\end{theorem}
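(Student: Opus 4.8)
The plan is to split the statement into two parts: (1) continuity of the map $p\mapsto Y^*(p)$, and (2) the graph-restriction on how the index $k(p)$ can change.

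For continuity, the natural route is a compactness-plus-uniqueness argument. Fix an admissible $p_0$ and let $p_n\to p_0$ be admissible. By Lemma~\ref{lem:equi} (specifically the bound \eqref{max}), the points $Y^*(p_n)$ lie in a fixed compact box depending only on a neighbourhood of $p_0$, so some subsequence converges to a limit $\bar Y$. Since the defining relations \eqref{FY} and the $F$-stability inequalities $F_i(Y)\le 0$ are closed conditions depending continuously on $p$, the limit $\bar Y$ is an $F$-stable equilibrium point for $p_0$. By Theorem~\ref{theo:existandunique} the $F$-stable point for $p_0$ is unique, so $\bar Y=Y^*(p_0)$; as this holds for every convergent subsequence, the whole sequence converges, giving continuity. (One must also note that admissibility \eqref{assum} is an open condition, so a neighbourhood of $p_0$ consists of admissible vectors.)

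For the second part, I would argue as follows. By \eqref{Ediag} each candidate $E_k(p)$ is an explicit rational (hence continuous, where defined) function of $p$ on the admissible region, and by Proposition~\ref{proE2} the set of $p$ for which $E_k$ is $F$-stable is a closed semialgebraic region $\mathcal{R}_k$ cut out by the listed inequalities. These regions cover the admissible parameter space and have disjoint interiors (by uniqueness of the $F$-stable point). If $Y^*(p)=E_{k_0}$ is \emph{nondegenerate} at some $p$, then all the defining inequalities of $\mathcal{R}_{k_0}$ at $p$ are \emph{strict} — this is exactly what nondegeneracy means in view of the borderline-coincidence statement at the end of Proposition~\ref{proE2} (an equality in one of those inequalities forces $E_{k_0}$ to coincide with a neighbouring $E_j$, i.e. a vanishing coordinate of $Y^*$ or of $F(Y^*)$). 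Hence $p$ lies in the interior of $\mathcal{R}_{k_0}$, so $k(\cdot)$ is locally constant there; the index can only change as $p$ crosses a common boundary face of two regions $\mathcal{R}_{k_0}$ and $\mathcal{R}_{k_1}$, at which point $E_{k_0}=E_{k_1}$. The edges of the graph $\Gamma$ in Fig.~\ref{graph} are precisely the pairs $(k_0,k_1)$ for which $\mathcal{R}_{k_0}$ and $\mathcal{R}_{k_1}$ share a codimension-one face; so one checks, pair by pair using the explicit inequalities of Proposition~\ref{proE2} together with the relations \eqref{e345}–\eqref{e84} and \eqref{e67}, which coincidences $E_{k_0}=E_{k_1}$ are actually attainable by admissible $p$ and which are not (e.g. $E_2=E_3$ when $S_2=\sigma_1$, $E_3=E_6$ when $S_6=\sigma_1$, $E_5=E_6$ when $S_6=\sigma_3$, $E_6=E_8$ when $(S_6-S_8)\Delta=0$, etc.). Assembling the admissible adjacencies yields exactly the edge set of $\Gamma$.

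The main obstacle is the last bookkeeping step: verifying that the adjacency structure produced by the inequalities of Proposition~\ref{proE2} is exactly the graph $\Gamma$, neither more nor fewer edges. This requires showing that certain pairs $E_{k_0},E_{k_1}$ can never coincide for admissible $p$ (using \eqref{assum} and Remark~\ref{rem2}, e.g. that $\delta$ and $\Delta$ cannot vanish together), and that the "boundary" between two non-adjacent regions, if nonempty, always has codimension $\ge 2$ and hence cannot be crossed by a generic continuous path — so the real content is a finite but careful case analysis of the semialgebraic geometry of the regions $\mathcal{R}_k$, which the explicit formulae \eqref{e345}–\eqref{e84} make tractable.
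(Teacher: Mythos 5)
Your continuity argument is exactly the paper's: compactness of $\{Y^*(p_n)\}$ via the a priori bound \eqref{max}, passage to the limit in the closed system \eqref{FEsys}, and identification of the limit through the uniqueness in Theorem~\ref{theo:existandunique}; that half is correct and needs no changes.

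For the second half your overall strategy (nondegeneracy forces strict inequalities, hence local constancy of $k(p)$; a change of index forces a coincidence $E_{k_0}(p^*)=E_{k_1}(p^*)$; the admissible coincidences are read off from the explicit formulae) is in the same spirit as the paper, which organizes the coincidence analysis through the worked degenerate example ($E_5=E_6$, and $E_5=E_6=E_7$ forcing $E_8=E_5$ nondegenerate) and the subordination principle of Proposition~\ref{superposition}, i.e.\ through which differences $S_i-S_j$ occur in the coordinates of $E_k$ and of $F(E_k)$ in \eqref{Ediag} and \eqref{Fdiag}. However, you explicitly defer the decisive step --- the pair-by-pair verification that the attainable coincidences are \emph{exactly} the edges of $\Gamma$ --- and that deferred step is the actual content of the assertion about $\Gamma$. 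Without it nothing rules out, say, a direct transition $E_3\to E_8$ or $E_2\to E_6$; excluding these requires concrete use of the formulae, e.g.\ that the last coordinate of $E_8$ equals $(\sigma_2-\sigma_1)\alpha_1\alpha_2/\Delta\ne0$ whenever $E_8$ exists (so $E_8$ can never coincide with $E_2,E_3,E_4$), that $\Delta\to0$ along a putative transition would contradict the boundedness \eqref{max}, and Remark~\ref{rem2} that $\delta$ and $\Delta$ cannot vanish simultaneously. Finally, your remark that non-adjacent regions could only meet in codimension $\ge2$ and "hence cannot be crossed by a generic continuous path" proves less than the theorem claims: the statement is for \emph{any} continuous perturbation, so such multiple-coincidence points must be handled directly, as the paper does by showing that at a triple coincidence like $E_5=E_6=E_7$ the $F$-stable point is in fact the nondegenerate $E_8$, so no forbidden index jump occurs there. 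In short: part one matches the paper; part two is the right plan but the case analysis that constitutes the proof of the graph statement is announced rather than carried out.
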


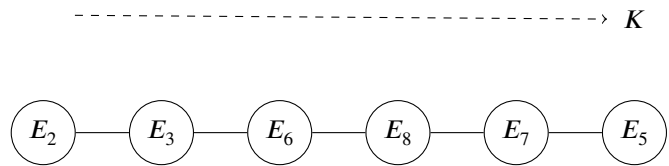
\begin{figure}[ht]
\begin{center}
\begin{tikzpicture}[node distance=7mm]

\node[draw,circle,text opacity=1] (2) at (0,0) {$E_2$};
\node[draw,circle,text opacity=1,right =of 2 ] (3) {$E_3$};
\node[draw,circle,text opacity=1, right =of 3] (6) {$E_6$};
\node[draw,circle,text opacity=1, right  =of 6] (8) {$E_8$};
\node[draw,circle,text opacity=1, right =of 8] (7) {$E_7$};
\node[draw,circle,text opacity=1, right  =of 7] (5) {$E_5$};
\draw[-] (2) -- (3) -- (6) -- (8)-- (7)-- (5); 

\node[circle,above= of 2] (P) {$\phantom{S_2}$};
\node[circle,above= of 5] (Q) {$K$};
\draw[->,dashed] (P) -- (Q);

\end{tikzpicture}
\end{center}
\caption{The transition graph $\Gamma$ of $F$-stable points as a function of the carrying capacity $K$.}
\label{graph}
\end{figure}

\begin{proof}
First note that an $F$-stable point is uniquely determined as the (unique) solution $Y^*(p)$ of  the system
\begin{equation}\label{FEsys}
\begin{split}
Y^*(p)&\ge0\\
F(p,Y^*(p))&\le 0\\
Y_i^*(p)F_i(p,Y^*(p))&=0, \quad \forall i=0,1,2,3,
\end{split}
\end{equation}
where $F_i(p,Y)$ are obtained from \eqref{gen11}. Let $p_k\to p_0$ be a sequence of admissible points converging to an admissible value $p_0$. Then each $Y^*(p_k)$ satisfies \eqref{FEsys} for $p=p_k$. It also follows from \eqref{max} that $Y^*(p_k)$ is a bounded subset of $\R{4}_+$, thus has an accumulation point, say $\lim_{i\to \infty} Y^*(p_{k_i})=\hat Y$ for some subsequence $k_i\to\infty$. Since the left hand sides of \eqref{FEsys} ar continuous functions, $\hat Y$ satisfies \eqref{FEsys}  for $p=p_0$, therefore, by the uniqueness, $\hat Y=Y^*(p_0)$. This also implies that there can exist at most one accumulation point of $\{Y^*(p_k)\}_{k\ge 1}$, therefore $Y^*(p_k$ must converge to $Y^*(p_0)$, the continuity of $p\to Y^*(p)$ follows.

\end{proof}

In particular, it is important to describe the dependence  $K\to Y^*(K)$ when all other parameters
$$
q=(\mu_i,\alpha_j,\eta_k)\in \R{8}_+,\quad 1\le i\le 3, \,\,1\le j\le 3, \,\,1\le k\le 2,
$$
 are fixed and admissible (i.e. \eqref{assum} is satisfied).  A closer inspection of \eqref{Kineq} and \eqref{Ediag} reveals the following monotonicity result.

\begin{theorem}
\label{pro:K}
The  susceptible class $Y_0^*(p)$ is a nondecreasing function of $K$. More precisely, $Y_0^*(p)$ is locally strongly monotonic increasing if $Y^*(p)=E_k$ with $k\in\{2,6,7\}$ ant it is locally constant if $Y^*(p)=E_k$ with $k\in \{3,5,8\}$.
\end{theorem}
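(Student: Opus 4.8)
The plan is to combine the explicit formulae \eqref{capac} and \eqref{e345}--\eqref{e8} for the susceptible coordinates $S_k=(E_k)_0$ with the continuity of the map $p\mapsto Y^*(p)$ established in Theorem~\ref{th:cont}. Fix all the remaining parameters $(\mu_i,\alpha_j,\eta_l)$ and regard $S_2,\dots,S_8$ as functions of $K$ alone. The key structural observation is a \emph{scaling dichotomy}: the quantities $S_3=\sigma_1$, $S_4=\sigma_2$, $S_5=\sigma_3$ and $S_8=\delta/\Delta$ do not involve $K$, whereas $S_2=\tfrac{K}{b}(b-\mu_0)$ and, after substituting this into \eqref{e6}--\eqref{e7},
\[
S_6=\frac{K}{b}\Bigl(b-\mu_0-\frac{\alpha_1\alpha_3}{\eta_1}(\sigma_3-\sigma_1)\Bigr),\qquad S_7=\frac{K}{b}\Bigl(b-\mu_0-\frac{\alpha_2\alpha_3}{\eta_2}(\sigma_3-\sigma_2)\Bigr),
\]
are positively homogeneous of degree one in $K$; that is, $S_k(K)=c_kK$ with $c_k$ independent of $K$ for $k\in\{2,6,7\}$.

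Next I would fix the sign of $c_k$ for $k\in\{2,6,7\}$. One has $c_2=(b-\mu_0)/b>0$ by \eqref{birthrel}. For $k\in\{6,7\}$, if $Y^*(p)=E_k$ for the given admissible $p$ with carrying capacity $K$, then Proposition~\ref{proE2}(iv)--(v) (equivalently \eqref{Kineq}) gives $S_k(K)\ge\sigma_1>0$; since $S_k(K)=c_kK$ and $K>0$, this forces $c_k>0$. Recall from Proposition~\ref{rem1} that $E_1$ and $E_4$ are never $F$-stable, so for every $K>0$ one has $Y_0^*(K)=S_{k(K)}(K)$ with $k(K)\in\{2,3,5,6,7,8\}$. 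If $Y^*(p)$ is a nondegenerate $F$-stable point, all the inequalities in the LCP characterization \eqref{FEsys} are strict, hence — by continuity of $p\mapsto Y^*(p)$ — the index $k(K)$ is constant near $K$; on that neighbourhood $Y_0^*(K)=S_k(K)$, which is strictly increasing (with derivative $c_k>0$) for $k\in\{2,6,7\}$ and constant for $k\in\{3,5,8\}$. This yields the ``locally strongly monotonic increasing'' resp. ``locally constant'' assertions.

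It remains to deduce global monotonicity. I claim $K\mapsto Y_0^*(K)$ is locally nondecreasing at every $K_0>0$; being continuous by Theorem~\ref{th:cont}, it is then globally nondecreasing. The nondegenerate case was just handled. If $Y^*(K_0)$ is degenerate, then $K_0$ is a transition value; by Proposition~\ref{proE2} each transition corresponds to an equality between two of the $S_k$, and since these are affine (indeed linear or constant) in $K$ with distinct leading behaviour, each such equality holds for at most one $K$ — the transition values are therefore isolated. Hence on $(K_0-\varepsilon,K_0)$ and on $(K_0,K_0+\varepsilon)$ the index is constant, say $k_1$ and $k_2$ with $E_{k_1},E_{k_2}$ adjacent in the graph $\Gamma$; by continuity $S_{k_1}(K_0)=Y_0^*(K_0)=S_{k_2}(K_0)$, and since $Y_0^*$ is nondecreasing on each half-interval and continuous at $K_0$, it is nondecreasing across $K_0$. (One may note that $\Gamma$ alternates between the ``increasing'' vertices $\{E_2,E_6,E_7\}$ and the ``constant'' vertices $\{E_3,E_5,E_8\}$, which makes this gluing especially transparent.)

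The one genuinely non-bookkeeping point — hence the main obstacle — is the positivity $c_6,c_7>0$. It is not obtained from an explicit inequality among the fundamental parameters, but indirectly: $S_6$ and $S_7$ are forced to be positive precisely on the range of $K$ on which $E_6$, respectively $E_7$, is the $F$-stable point, and homogeneity then propagates this positivity to all $K$. Everything else (the homogeneity computation, continuity, isolation of transition values, and the gluing) is routine once Theorem~\ref{th:cont} and Proposition~\ref{proE2} are available.
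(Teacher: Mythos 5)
Your proposal is correct and follows essentially the route the paper intends: the paper gives no written proof beyond the remark that the result follows from ``a closer inspection of \eqref{Kineq} and \eqref{Ediag}'', and your write-up supplies exactly that inspection — $S_2,S_6,S_7$ are homogeneous of degree one in $K$ while $S_3,S_5,S_8$ are $K$-independent, the slopes $c_6,c_7$ are positive precisely because $F$-stability of $E_6,E_7$ forces $S_6\ge\sigma_1$, $S_7\ge\sigma_2$ via Proposition~\ref{proE2} (cf.\ \eqref{Kineq}), and the local statements are glued by the continuity of $p\mapsto Y^*(p)$ from Theorem~\ref{th:cont}. Your two ``routine'' steps (local constancy of the zero pattern at nondegenerate points, via strict signs plus complementarity, and isolation of the transition values, since every degeneracy equality pairs one of the $K$-linear quantities $S_2,S_6,S_7$ with a $K$-independent one) do hold and are exactly the details the paper leaves to the reader, so no gap remains.
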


According to Theorem~\ref{pro:K}, there can exist only three following transition scenarios  (depending on the choice of $q\in \R{8}_+$). Namely, if $K$ is increasing in $(0,\infty)$ then exactly one of the following alternatives hold:

\begin{itemize}
\item[(i)]
$E_2\to E_3$;
\item[(ii)]
$E_2\to E_3\to E_6\to E_8$;
\item[(iii)]
$E_2\to E_3\to E_6\to E_8\to E_7\to E_5$;
\end{itemize}
The corresponding graphs of the susceptible class $Y_0^*(p)$  are pictured in Figure~\ref{fig3}. Recall that the modified carrying capacity $S_2=K(1-\frac{\mu_0}{b})$ is proportional to $K$.

\begin{figure}[ht]
\begin{center}
\begin{tikzpicture}[scale=0.55]
\draw[->] (-0.2,0) -- (4.2,0) coordinate (x axis) node[above] {$K$};
 \draw[->] (0,-0.2) -- (0,6.5) coordinate (y axis) node[left] {$y$};

\draw[thick,color=black,name path=plot,domain=0:2] plot ({\x},{\x});
\node[below, rotate=0]  at (1,0) {$E_2$};
\draw[thick,color=black,dashed,domain=0:2] plot ({2},{\x});
\draw[thick,color=black,domain=-0.1:0.1] plot ({\x},{2});
\node[left, rotate=0]  at (0,2) {$\sigma_1$};
\draw[thick,color=black,dashed,domain=0:2] plot ({\x},2);

\draw[thick,color=black,domain=2:4.1] plot ({\x},{2});
\node[below, rotate=0]  at (3,0) {$E_3$};

\end{tikzpicture}
\begin{tikzpicture}[scale=0.55]
\draw[->] (-0.2,0) -- (7.2,0) coordinate (x axis) node[above] {$K$};
 \draw[->] (0,-0.2) -- (0,6.5) coordinate (y axis) node[left] {$y$};

\draw[thick,color=black,name path=plot,domain=0:2] plot ({\x},{\x});
\node[below, rotate=0]  at (1,0) {$E_2$};
\draw[thick,color=black,dashed,domain=0:2] plot ({2},{\x});
\draw[thick,color=black,domain=-0.1:0.1] plot ({\x},{2});
\node[left, rotate=0]  at (0,2) {$\sigma_1$};

\draw[thick,color=black,domain=-0.1:0.1] plot ({\x},{3.4});
\node[left, rotate=0]  at (0,3.4) {$\sigma_3$};
\draw[thick,color=black,dashed,domain=0:7] plot ({\x},3.4);

\draw[thick,color=black,domain=2:4] plot ({\x},{2});
\node[below, rotate=0]  at (3,0) {$E_3$};
\draw[thick,color=black,dashed,domain=0:2] plot ({4},{\x});
\draw[thick,color=black,dashed,domain=0:2] plot ({\x},2);

\draw[thick,color=black,domain=4:5.6] plot ({\x},{0.5*\x});
\node[below, rotate=0]  at (4.8,0) {$E_6$};
\draw[thick,color=black,dashed,domain=0:2.8] plot ({5.6},{\x});
\draw[thick,color=black,domain=5.6:7] plot ({\x},{2.8});
\node[below, rotate=0]  at (6.5,0) {$E_8$};

\end{tikzpicture}
\begin{tikzpicture}[scale=0.55]
\draw[->] (-0.2,0) -- (10,0) coordinate (x axis) node[above] {$K$};
 \draw[->] (0,-0.2) -- (0,6.5) coordinate (y axis) node[left] {$y$};

\draw[thick,color=black,name path=plot,domain=0:2] plot ({\x},{\x});
\node[below, rotate=0]  at (1,0) {$E_2$};
\draw[thick,color=black,dashed,domain=0:2] plot ({2},{\x});
\draw[thick,color=black,domain=-0.1:0.1] plot ({\x},{2});
\node[left, rotate=0]  at (0,2) {$\sigma_1$};
\draw[thick,color=black,dashed,domain=0:2] plot ({\x},2);

\draw[thick,color=black,domain=-0.1:0.1] plot ({\x},{3.4});
\node[left, rotate=0]  at (0,3.4) {$\sigma_3$};
\draw[thick,color=black,dashed,domain=0:8.5] plot ({\x},3.4);

\draw[thick,color=black,domain=2:4] plot ({\x},{2});
\node[below, rotate=0]  at (3,0) {$E_3$};
\draw[thick,color=black,dashed,domain=0:2] plot ({4},{\x});

\draw[thick,color=black,domain=4:5.6] plot ({\x},{0.5*\x});
\node[below, rotate=0]  at (4.8,0) {$E_6$};
\draw[thick,color=black,dashed,domain=0:2.8] plot ({5.6},{\x});
\draw[thick,color=black,domain=5.6:7] plot ({\x},{2.8});
\node[below, rotate=0]  at (6.5,0) {$E_8$};
\draw[thick,color=black,dashed,domain=0:2.8] plot ({7},{\x});

\draw[thick,color=black,domain=7:8.5] plot ({\x},{(0.4)*\x});
\draw[thick,color=black,dashed,domain=0:3.4] plot ({8.5},{\x});
\node[below, rotate=0]  at (7.7,0) {$E_7$};

\draw[thick,color=black,domain=8.5:10] plot ({\x},3.4);
\node[below, rotate=0]  at (9.4,0) {$E_5$};
\end{tikzpicture}
\end{center}
\vspace*{-0.5cm}
\caption{Three possible scenarios of the transition dynamics}
\label{fig3}
\end{figure}
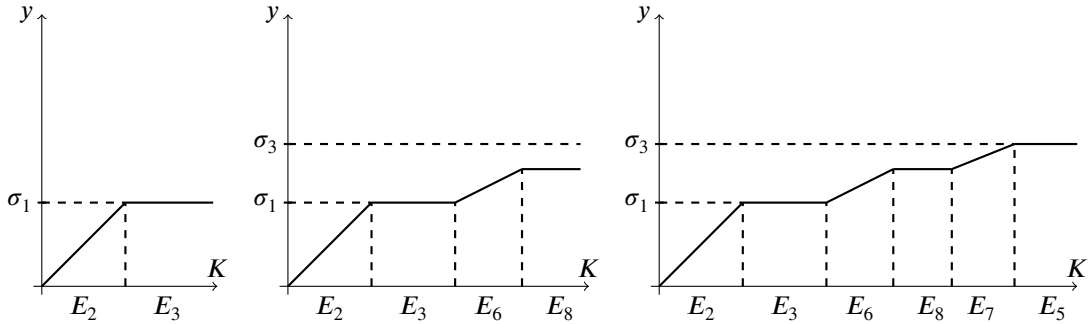

We emphasize  the monotonic non-decreasing dependence of $Y_0^*(p)$ as a function of $K$. It is also interesting to point out that the value of  $Y_0^*(p)$ stabilizes when $K\ge K^*(q)$. In other words, after a certain threshold value $K^*(q)$, the equilibrium point $Y^*(K,q)$ still depends on $K$ except for the susceptible class which becomes constant.

 Let $p$ be a fixed admissible vector. Then it easily follows from \eqref{Ediag} and \eqref{Fdiag} that if $0<S_2<\sigma_1$ then $Y^*(p)=E_2$ is the $F$-stable point. If $S_2=\sigma_1$ then $E_2=E_3$ is the $F$-stable point. Similarly, using \eqref{e6} we also see that if
 $$
 \sigma_1<S_2< \sigma_1':=\sigma_1+(\sigma_3-\sigma_1)\frac{K\alpha_1\alpha_3}{b\eta_1}
 $$
 then $Y^*(p)=E_3$. When $S_2=\sigma_1'$, we have $S_3=S_6$, and it follows from \eqref{e6} that in fact $Y^*(p)=E_3=E_6$. If $S_2$ becomes a bit large than $\sigma_1'$ then $Y^*(p)=E_6$.

Let us consider as an example the behaviour of the equilibrium point   $Y^*(p)=E_5$. Suppose that $Y^*(p)=E_5$ is $F$-stable for some $p$ but it is  a degenerate point. Since $(E_5)_0$ and $(E_5)_3$ are positive, the degeneracy means that, for instance, $(F(E_5))_1=0$. This yields $S_5=S_6$. We claim that in that case $E_5=E_6$. Indeed, one trivially has $(E_5)_0=S_5=S_6=(E_6)_0$ and  $(E_5)_i=0=(E_6)_i$ for $i=1,2$. Also, using  \eqref{e6} we get
$$
(E_5)_3=(S_2-S_5)\frac{b}{K\alpha_3}=
(S_2-S_6)\frac{b}{K\alpha_3}=(S_5-S_3)\frac{\alpha_1}{\eta_1}=
(S_6-S_3)\frac{\alpha_1}{\eta_1}=(E_6)_3.
$$
Thus, $Y^*(p)=E_5=E_6$. In particular, $(F(E_6))_2=(F(E_5))_2\le 0$. If the latter inequality is strong then $Y^*(p)=E_6$, hence $Y^*(p)$ is nondegenerate. If  $(F(E_5))_2= 0$ then $S_5=S_6=S_7$ which  imply by the same argument that $E_5=E_6=E_7$. Then it follows from \eqref{e67} that $\sigma_3\Delta=\delta$. Using Remark~\ref{rem2},  $\delta$ and $\Delta$ are nonzero, therefore $S_8=\frac{\delta}{\Delta}=\sigma_3=S_5$. In particular, this yields from \eqref{e83} that $(\sigma_3-\sigma_1)\Delta=(\sigma_2-\sigma_1)\eta_1\alpha_2$, hence $\Delta>0$. It follows that $E_8=E_5$. But the latter (since  $E_8$ missing the $F$-components) means that $Y^*(p)=E_8$ is nondegenerate.

In general, using the above argument  implies that if $E_k$ is $F$-stable but degenerate then there exists $F$-stable and \textit{nondegenrate} $E_m$ with $m>k$ such that $E_m=E_k$. It interesting to understand the corresponding transition dynamics. To this end, let us write $(i,j)\in E_k$ (resp. $\in F(E_k)$) if $c(S_i-S_j)$ is present in $E_k$ (resp. $F(E_k)$). If $(i,j)\in E_k$ (or $F(E_k)$) implies that either $i=k$ or $j=k$ (this  holds even true for $E_8$ modulus relations \eqref{e83} or \eqref{e84}). A simple examination shows that the following subordination principle holds true.

\begin{proposition}\label{superposition}
$(i,j)\in E_j$ if and only if $(i,j)\in F(E_i)$, and if $(i,j)\in E_j$ (resp. in $F(E_j)$) then $i<j$ (resp. $i>j$).
\end{proposition}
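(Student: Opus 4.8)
\emph{Approach.} The plan is to read the proposition off the tables \eqref{Ediag} and \eqref{Fdiag} after one normalization step. First I record that, apart from its susceptible coordinate $S_k$, every nonzero coordinate of $E_k$ and of $F(E_k)$ in \eqref{Ediag}--\eqref{Fdiag} is a nonzero constant multiple of a difference $S_a-S_b$ of two of $S_2,\dots,S_8$; and for $E_2,\dots,E_7$ and for all $F(E_m)$ the point-index already lies in $\{a,b\}$. The sole coordinate not yet in this form is the last one of $E_8$, namely $(S_4-S_3)\frac{\alpha_1\alpha_2}{\Delta}$, and \eqref{e83}--\eqref{e84} give
\[
(S_4-S_3)\frac{\alpha_1\alpha_2}{\Delta}=(S_8-S_3)\frac{\alpha_1}{\eta_1}=(S_8-S_4)\frac{\alpha_2}{\eta_2}.
\]
Thus this coordinate may be read as a multiple of $S_8-S_3$ \emph{or} of $S_8-S_4$ --- this is exactly the ``modulo \eqref{e83}--\eqref{e84}'' proviso --- so a single coordinate of $E_8$ legitimately accounts for both index pairs $\{3,8\}$ and $\{4,8\}$. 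Likewise \eqref{e83}--\eqref{e84} rewrite the entries $(S_3-S_4)\alpha_2$ of $F(E_3)$ and $(S_4-S_3)\alpha_1$ of $F(E_4)$ as positive multiples of $S_3-S_8$ and of $S_8-S_4$; after this, every non-susceptible coordinate $c(S_a-S_b)$ of an $F(E_k)$ also has $k\in\{a,b\}$.

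Next I would tabulate, for each $E_k$ and each $F(E_k)$, the unordered index pairs $\{a,b\}$ carried by its nonzero non-susceptible coordinates. A direct inspection then shows that the pairs occurring among the $E_k$ are exactly
\[
\{2,3\},\ \{2,4\},\ \{2,5\},\ \{5,6\},\ \{3,6\},\ \{5,7\},\ \{4,7\},\ \{6,8\},\ \{7,8\},\ \{3,8\},\ \{4,8\},
\]
each in a single $E_k$, namely with $k=\max\{a,b\}$; and the pairs occurring among the $F(E_k)$ are the same eleven, each in a single $F(E_k)$, namely with $k=\min\{a,b\}$ (recall $F(E_8)\equiv 0$). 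Hence, writing a pair as $(i,j)$ with $i<j$: membership ``$(i,j)\in E_k$ for some $k$'' forces $k=j$, membership ``$(i,j)\in F(E_k)$ for some $k$'' forces $k=i$, and each of the two is equivalent to $\{i,j\}$ lying on this list --- which is precisely the biconditional $(i,j)\in E_j\iff(i,j)\in F(E_i)$. The ordering statements come out of the same inspection: if $(i,j)\in E_j$ then the point-index $j$ is the larger of the two indices, so $i<j$; and if a difference $c(S_i-S_j)$, which then necessarily has $i>j$, lies in $F(E_j)$, then the point-index $j$ is the smaller of the two, so $i>j$. The signs of the coefficients needed to fix these orientations follow from $S_3=\sigma_1<S_4=\sigma_2<S_5=\sigma_3$ (see \eqref{e345}) and, wherever $E_8$ is present, from $\Delta>0$ (Proposition~\ref{proE2}(vi)).

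\emph{Main obstacle.} The only part that is not a purely mechanical table comparison is the normalization in the first paragraph: one must observe that the pair of identities \eqref{e83}--\eqref{e84} converts the three ``constant'' entries $\pm(S_4-S_3)$ appearing in $E_8$, $F(E_3)$ and $F(E_4)$ into differences involving $S_8$, and in particular that the single $E_8$-coordinate has to be allowed to represent both of the pairs $\{3,8\}$ and $\{4,8\}$. Once this is in place, the proposition reduces to comparing two identical eleven-element lists of index pairs read off \eqref{Ediag} and \eqref{Fdiag}.
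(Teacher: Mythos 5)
Your proposal is correct and is essentially the paper's own argument: the paper justifies the proposition only by the remark that the claim ``holds even true for $E_8$ modulus relations \eqref{e83} or \eqref{e84}'' followed by ``a simple examination,'' and your explicit tabulation of the index pairs in \eqref{Ediag} and \eqref{Fdiag}, after normalizing the $(S_4-S_3)$-entries of $E_8$, $F(E_3)$, $F(E_4)$ via \eqref{e83}--\eqref{e84}, is exactly that examination carried out in detail.
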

Let us denote by $E=\{(i,j): \text{there exists $E_i$ such that } (i,j) \in E_i\}
$ and let $\Gamma$ denote the undirected graph with nodes $\{k:2\le k\le 8\}$  and edges $E$, see Figure~\ref{graph}. From the biological point of view, the graph $\Gamma$ shows the transition dynamics of a $F$-stable point $Y^*(p)$ depending on continuous perturbations of $p$.


The  trivial equilibrium $E_1=\mathbf{0}$ (i.e. when \textit{no disease or susceptible}), the \textit{disease free equilibrium} point $E_2$, the equilibrium states with the presence of 1st, 2nd strain and the presence of coinfection $E_3,E_4,E_5$ and the equilibrium states  corresponding the coexistence of more than 2 classes $E_6,E_7,E_8$.

\section{Some remarks on the infinite carrying capacity}\label{H.C.C}
To complete the above picture, we announce without proofs some results on the case of very high (infinite) carrying capacity, i.e. $K=\infty$. We return to this situation with detailed discussion somewhere else. The dynamics of the limit case $K=\infty$ is completely different and the global stability is failed in this case, see Remark~\ref{reminf}. Still, we have some nice properties which hold for general parameters.

An easy analysis shows that for $K=\infty$ some equilibrium points from \eqref{Ediag} disappear (`pass to infinity'), so that, generically, only following three equilibrium points exist:
\begin{align*}
E'_3&=(\sigma_1,\frac{b-\mu_0}{\alpha_1},0,0), \\
E'_5&=(\sigma_3,0,0,\frac{b-\mu_0}{\alpha_3}), \\
E'_8&=(\frac{\delta}{\Delta},
\frac{\gamma_2}{\Delta},
\frac{\gamma_1}{\Delta}, (\sigma_2-\sigma_1)\frac{\alpha_1\alpha_2}{\Delta})
\end{align*}
where we assume that  $$\gamma_1=\eta_1(b-\mu_0)-\alpha_1\alpha_3(\sigma_3-\sigma_1)
$$ and
$$\gamma_2=\alpha_2\alpha_3(\sigma_3-\sigma_2)- \eta_2(b-\mu_0)
$$
are nonzero quantities.  The corresponding nontrivial $F$-parts are
\begin{align*}
F(E_3)&=(0,0,(\sigma_1-\sigma_2)\alpha_2,\frac{\gamma_1\alpha_3}{\alpha_1}), \\
F(E_5)&=(0,-\frac{\gamma_1}{\alpha_3},\frac{\gamma_2}{\alpha_3},0).
\end{align*}
In the borderline case $\gamma_1\gamma_2=0$, analysis is somewhat more delicate, here there exist two points which correspond to $E_6$ and $E_7$ for $K<\infty.$

Then the stability diagram is shown in Figure~\ref{fig4}.
\begin{figure}[ht]
\begin{center}
\begin{tikzpicture}[scale=0.45]
\draw[->] (-0.2,0) -- (10,0) coordinate (x axis) node[above] {$\eta_1$};
 \draw[->] (0,-0.2) -- (0,8) coordinate (y axis) node[left] {$\eta_2$};

\draw[thick,dashed,color=black,name path=plot,domain=0:8] plot ({3.5},{\x});
\draw[thick,dashed,color=black,name path=plot,domain=3.5:10] plot ({\x},{3});

\node[right, rotate=0]  at (1.2,6) {$E'_3$};
\node[right, rotate=0]  at (2.3,3.2) {$P$};

\node[right, rotate=0]  at (5.5,1.5) {$E'_8$};

\node[right, rotate=0]  at (5.5,6) {$E'_5$};
\end{tikzpicture}
\end{center}
\vspace*{-0.5cm}
\caption{Three  equilibrium states for $K=\infty$ (the  point $P$ is given by $\gamma_1=\gamma_2=0$)}
\label{fig4}
\end{figure}
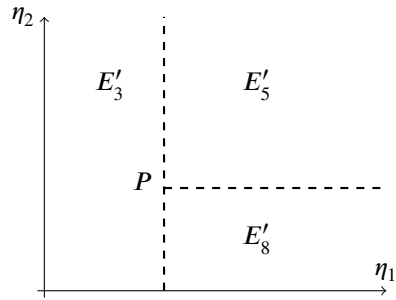

\begin{proposition}
Suppose that $K=\infty$ holds. Then for any  $q\in \R{10}_+$ such that $\gamma_i\ne0$, $i=1,2$, there exists a unique $F$-stable point $Y^*(p)\in \{E'_3,E'_5,E'_8\}$, see the Figure~\ref{fig4}. Moreover, in each of the three cases, the   $\omega$-limit set~ $\omega(Y)$ of a solution to \eqref{model} is one of the following:
	\begin{enumerate}
		\item[(a)]  If $Y^*(p)=E'_3$  then,~
		$$
		\omega(Y)=\{y\in \R{4} : y_2=y_3=0,~y_0-Y_0^*\ln y_0+ y_1-Y_1^*\ln y_1=C_1\},
		$$
	
		\item[(b)] If  $Y^*(p)=E'_5$ then,~
		$$
		\omega(Y)=\{y\in \R{4} : y_1=y_2=0,~y_0-Y_0^*\ln y_0+ y_3-Y_3^*\ln y_1=C_2\},
		$$

		\item[(c)] 	If $Y^*(p)=E'_8$ then,~
		$$
		\omega(Y)=\{y\in \R{4} :~y_0-Y_0^*\ln y_0+ y_1-Y_1^*\ln y_1+y_2-Y_2^*\ln y_2+ y_3-Y_3^*\ln y_3=C_3\},
		$$
	\end{enumerate}
where $C_i$ are some constants.
\end{proposition}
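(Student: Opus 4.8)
The plan is to prove the statement in two stages: first the existence and uniqueness of the $F$-stable point by a sign analysis of $\gamma_1,\gamma_2$, and then the description of $\omega(Y)$ in each of the three regimes by using the Volterra identity \eqref{Fequa} with the quadratic term switched off. Indeed, for $K=\infty$ the coefficient $\frac{b}{K}$ vanishes, so along any trajectory of \eqref{model} one has $\frac{d}{dt}V_{Y^*}(Y(t))=\sum_{i=0}^{3}F_i(Y^*)Y_i(t)$, and this single identity drives everything that follows.

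\emph{Existence and uniqueness.} Since for $K=\infty$ the only equilibria are $E'_3,E'_5,E'_8$, it suffices to test $F$-stability of each against its $F$-part. A direct computation gives: $E'_3$ is $F$-stable iff $\gamma_1\le 0$ (its only possibly positive $F$-component is the one proportional to $\gamma_1$, the other equalling $(\sigma_1-\sigma_2)\alpha_2<0$); $E'_5$ is $F$-stable iff $\gamma_1\ge 0$ and $\gamma_2\le 0$; and $E'_8$, for which $F(E'_8)=0$, is $F$-stable iff all of its coordinates are nonnegative, i.e.\ iff $\Delta>0$, $\gamma_1>0$, $\gamma_2>0$ (in which case $\delta/\Delta>0$ automatically by Remark~\ref{rem2}). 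Under the standing assumption $\gamma_1\gamma_2\ne 0$ these three conditions are mutually exclusive and exhaustive: if $\gamma_1<0$ only $E'_3$ qualifies; if $\gamma_1>0>\gamma_2$ only $E'_5$; and if $\gamma_1,\gamma_2>0$ then $\sigma_1<\sigma_2$ forces $\alpha_2\eta_1>\alpha_1\eta_2$, i.e.\ $\Delta>0$, so only $E'_8$ qualifies. Hence there is exactly one $F$-stable point among $\{E'_3,E'_5,E'_8\}$.

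\emph{The $\omega$-limit set, cases (a) and (b).} Here $Y^*$ is a boundary equilibrium. In case (a) one has $F_0(Y^*)=F_1(Y^*)=0$ while $F_2(Y^*),F_3(Y^*)<0$ (strictly, because $\gamma_1<0$), so $V_{Y^*}$ strictly decreases off the invariant face $\{y_2=y_3=0\}$; being bounded below (by $\psi_a(x)=x-a\ln x\ge\psi_a(a)$) it converges to some $C_1$, and $Y_2,Y_3\in L^1([0,\infty))$. Properness and monotonicity of $V_{Y^*}$ confine the trajectory to a compact subset of the open orthant, whence all derivatives $Y_i^{(k)}$ are bounded, and Lemma~\ref{lemma1} yields $Y_2(t),Y_3(t)\to 0$. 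Thus $\omega(Y)$ is a nonempty, compact, connected, flow-invariant subset of $\{y_2=y_3=0\}$; on that face \eqref{model} with $K=\infty$ reduces to the classical Lotka--Volterra predator--prey system with center $(Y_0^*,Y_1^*)=(\sigma_1,(b-\mu_0)/\alpha_1)$, whose nonconstant orbits are exactly the closed level curves of $y_0-Y_0^*\ln y_0+y_1-Y_1^*\ln y_1$ (the restriction of $V_{Y^*}$ to the face), each of them a minimal set. Since $V_{Y^*}\equiv C_1$ on $\omega(Y)$, the connected invariant set $\omega(Y)$ must coincide with the whole level curve, i.e.\ with the set in (a) — or with the single point $Y^*$ when $C_1$ equals the minimum. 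Case (b) follows verbatim after interchanging the roles of the coordinates $1$ and $3$ (the face becomes $\{y_1=y_2=0\}$, the predator is $I_{12}$ and the center is $((E'_5)_0,(E'_5)_3)$).

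\emph{Case (c): the main obstacle.} Now $Y^*=E'_8\gg 0$ and $F(Y^*)=0$, so the identity above gives $\frac{d}{dt}V_{Y^*}(Y(t))\equiv 0$: the Volterra function is a genuine first integral. Consequently the orbit stays on the level surface $\Sigma_{C_3}:=\{y\in\R{4}:V_{Y^*}(y)=C_3\}$ with $C_3=V_{Y^*}(Y(0))$, which is a compact topological $3$-sphere since $V_{Y^*}$ is strictly convex with minimum $Y^*$, and the flow preserves the invariant measure $\prod_i dy_i/y_i$ (a consequence of $a_{ii}=0$ for the skew-symmetric matrix $A$ of \eqref{gen11} at $K=\infty$), so its restriction to $\Sigma_{C_3}$ is volume-preserving. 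This already yields the inclusion $\omega(Y)\subseteq\Sigma_{C_3}$. The delicate point is the reverse inclusion, i.e.\ that a generic trajectory is recurrent and dense in $\Sigma_{C_3}$; I would attack it through a finer study of the $4$-dimensional conservative Lotka--Volterra dynamics on $\Sigma_{C_3}$, ruling out proper closed invariant subsets — in particular showing there is no second independent first integral that would confine the orbit to a lower-dimensional invariant subvariety — and combining this with Poincar\'e recurrence to obtain density. Establishing this density statement is the principal difficulty, and it is precisely what is responsible for the ``generic'' qualifier and the assumption $\gamma_i\ne 0$ in the proposition.
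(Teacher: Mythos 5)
First, a point of comparison: the paper does not actually prove this proposition — Section~\ref{H.C.C} explicitly announces these results ``without proofs'' and defers the $K=\infty$ case to a later paper — so your attempt has to stand entirely on its own. The parts of it that stand are the first two thirds. The sign analysis of $\gamma_1,\gamma_2$ for the $F$-stability of $E'_3,E'_5,E'_8$ is correct (including the key observation that $\gamma_1>0$ and $\gamma_2>0$ force $\Delta=\alpha_2\eta_1-\alpha_1\eta_2>0$, since $\alpha_2\eta_1-\alpha_1\eta_2>\alpha_1\alpha_2\alpha_3(\sigma_2-\sigma_1)/(b-\mu_0)$), although you should also dispose of the remaining equilibria at $K=\infty$, namely $\mathbf{0}$ and the strain-2 analogue $(\sigma_2,0,\frac{b-\mu_0}{\alpha_2},0)$, which are never $F$-stable exactly as in Proposition~\ref{rem1}. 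Cases (a) and (b) are also sound: with $\frac bK=0$ the identity \eqref{Fequa} makes $V_{Y^*}$ nonincreasing, its properness on $\{y_0>0,\,y_1>0\}$ (resp. $\{y_0>0,\,y_3>0\}$) gives boundedness of the trajectory — which is not automatic at $K=\infty$, so this substitute for Proposition~\ref{pro:Ybound} is needed and you supply it — then $Y_2,Y_3\in L^1$ with bounded derivatives, Lemma~\ref{lemma1} gives convergence to the face, and the $\omega$-limit set, being a nonempty compact invariant subset of a single closed level curve of the planar Lotka--Volterra integral, is that whole curve.

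The genuine gap is case (c), and you name it yourself: you only prove $\omega(Y)\subseteq\Sigma_{C_3}$, while the content of item (c) is precisely the reverse inclusion, and the route you sketch would not deliver it. Poincar\'e recurrence gives recurrence of almost every point with respect to the invariant measure, not density of the particular orbit you are following; and ruling out a second independent global first integral does not exclude proper closed invariant subsets of $\Sigma_{C_3}$. Indeed, at $K=\infty$ the matrix $A$ in \eqref{gen11} is skew-symmetric and invertible (when $\Delta\ne0$), so the system is a conservative Lotka--Volterra system with interior equilibrium, i.e.\ essentially a two-degree-of-freedom Hamiltonian-type flow; on the compact three-dimensional level set of $V_{Y^*}$ one typically expects invariant two-tori carrying quasi-periodic orbits whose closures are proper subsets of $\Sigma_{C_3}$ (compare Remark~\ref{reminf} and the reference to Duarte--Fernandes--Oliva there). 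So the equality $\omega(Y)=\Sigma_{C_3}$ for a general solution cannot be obtained by the soft measure-theoretic argument you propose, and as stated it may well require either a much finer analysis of this specific system or a weaker formulation; this is presumably exactly why the authors announce the proposition without proof.
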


Thus, the $\omega$-limit sets are either one-dimensional curves in the first two cases or a compact hypersurface in $\R{4}_+$ in the latter case.

\section{Discussion}
In this paper we proposed a SIR model with coinfection infection mechanism and observed the effect of density dependence population regulation on disease dynamics. The complete stability analysis of boundary equilibrium points and coexistence equilibrium point revealed that there is always a unique $F$-stable equilibrium point for any admissible set of parameters. The existence of an endemic equilibrium point guarantees the persistence of the disease with a possible future threat of any outbreak in the population.
In the absence of dual infection, exclusion of a strain with an invading strain was also observed which proves the existence of competitive exclusion principle.
Furthermore, we have also shown that addition of a density dependence factor in the susceptible population has played an important role in the disease dynamics. Increase in carrying capacity, increases the number of $F$-stable equilibrium points which makes the dynamics even more complicated. If carrying capacity is significantly high, then the oscillation in different classes are observed and these oscillations dampen to equilibrium point but it approaches the equilibrium point very slowly. We also find that increasing the resources of the population, increased carrying capacity for example by increased wealth, can increase the risk of infection which leads to a destabilization of a healthy population. This becomes especially interesting in the limit case when carrying capacity is very large. Then the healthy population is independent of carrying capacity (increased wealth), since the susceptible population remains constant for very large $K$. Instead the infected population increase as it depends on carrying capacity yet may reach a limit for infinitely large carrying capacities. An increased wealth may therefore only fuel the number of infected in the population which is very different from general expectations. These dynamics resembles the top down regulation in food chains and food webs [22]. In the limit case when $K=\infty$ we have observed the periodic behaviour of solution trajectories in the limit, which becomes even more complex for coexistence equilibrium point. These results indicate that this system has dynamics closely related to the Rosenzweigs [19] famous paradox of enrichment for predator prey models and Sharpe et al in [21] have also found the paradox for a disease model. In the future, we would also like to see if it is possible to analysis this model with more complexity by adding the interaction between two strains and to conduct the global stability analysis for that extended model.




\subsection*{Conflict of interest}

The authors declare no potential conflict of interests.

%

%

\appendix

\section{Two auxiliary lemmas\label{app1}}

The following results are in the spirit of the well-known Landau-Kolmogorov type estimates. On the other hand, our focus is non on an inequality but on  the convergence at infinity. We were  unable to find any explicit formulations like the lemmas below. On the other hand, these results are interesting by their own right and are very useful tools in integral estimates of a rather general ODEs including those of Lotka-Volterra type.
\begin{lemma}\label{lemma1}
	If $f(t) \in L^p ([0,\infty))\cap C^1([0,\infty))$, where $p\geq 1$ and  $f'\in L^\infty([0,\infty))$  then there exist $\lim_{t \rightarrow \infty} f(t) =0.$
\end{lemma}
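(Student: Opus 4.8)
The plan is to argue by contradiction, exploiting the fact that $f'\in L^\infty$ makes $f$ Lipschitz, so $f$ cannot oscillate too fast: if $|f|$ is bounded below by a positive constant at arbitrarily large times, it must be bounded below on intervals of a fixed positive length around those times, and this forces $\int|f|^p$ to diverge.

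First I would set $M:=\|f'\|_{L^\infty([0,\infty))}<\infty$. If $M=0$ then $f$ is constant and $f\in L^p$ forces $f\equiv 0$, so the conclusion is trivial; hence assume $M>0$. Now suppose, for contradiction, that $f(t)\not\to 0$ as $t\to\infty$. Then there exist $\varepsilon>0$ and an increasing sequence $t_n\to\infty$ with $|f(t_n)|\ge\varepsilon$ for all $n$. By the mean value theorem (or integrating $f'$), for every $t$ with $|t-t_n|\le\delta:=\varepsilon/(2M)$ one has $|f(t)-f(t_n)|\le M|t-t_n|\le\varepsilon/2$, and therefore $|f(t)|\ge\varepsilon/2$ on the interval $J_n:=[t_n-\delta,\,t_n+\delta]$.

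Passing to a subsequence if necessary, I may assume $t_{n+1}-t_n>2\delta$ and $t_1>\delta$, so that the intervals $J_n\subset[0,\infty)$ are pairwise disjoint and each has length $2\delta$. Then
$$
\int_0^\infty |f(t)|^p\,dt \;\ge\; \sum_{n\ge1}\int_{J_n}|f(t)|^p\,dt \;\ge\; \sum_{n\ge1}\Bigl(\frac{\varepsilon}{2}\Bigr)^{p}\cdot 2\delta \;=\;\infty,
$$
which contradicts $f\in L^p([0,\infty))$. Hence $\lim_{t\to\infty}f(t)=0$.

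There is no genuine obstacle here: the only points requiring a little care are the degenerate case $f'\equiv 0$ and the choice of a subsequence making the intervals $J_n$ disjoint and contained in $[0,\infty)$; both are routine. The argument is entirely elementary and uses only the Lipschitz bound coming from $f'\in L^\infty$ together with the integrability of $|f|^p$.
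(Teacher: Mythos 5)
Your proof is correct and follows essentially the same route as the paper's: argue by contradiction, use the Lipschitz bound $M=\|f'\|_{L^\infty}$ to force $|f|$ to stay bounded away from zero on disjoint intervals of fixed length around the times $t_n$, and conclude that $\int_0^\infty |f|^p\,dt$ diverges. The only (cosmetic) difference is that you use a constant lower bound $\varepsilon/2$ on intervals of radius $\varepsilon/(2M)$, while the paper uses the linear lower bound $\delta-M(t-t_k)$ and integrates it explicitly; your handling of the absolute values and of the degenerate case $f'\equiv 0$ is if anything slightly more careful.
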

\begin{proof}
	Let $M=\|f'\|_{L^\infty([0,\infty))}.$
	Arguing by  by contradiction, we can suppose  that there exist $t_k\nearrow\infty$ such that $t_{k+1}-t_k>\frac{\delta}{M}$ and $|f(t_k)|\geq \delta,$ for some fixed $\delta>0.$ 	Since the first derivative is bounded: $|f'(t)|\leq M$ for all $t\ge0$, we have by the mean value theorem for any $t$, $t_k\le t\le t_k+\frac{\delta}{M}$,
\begin{align*}
f(t)&=f(t_k)+f'(\xi)(t-t_k)\\
&\ge \delta-M(t-t_k)\\
&\geq M(t_k-t+\frac{\delta}{M}).
\end{align*}
Note that by virtue of our choice of $t$ the right hand side of the latter inequality is nonnegative. 	Therefore,
$$
\int_{t_k}^{t_k+\frac{\delta}{M}} |f(t)|^p dt\geq\int_{t_k}^{t_k+\frac{\delta}{M}} M^p(t_k +\frac{\delta}{M}-t)^p dt =	M \int_{0}^{\frac{\delta}{M}} s^p ds=
	\frac{M}{p+1}\left(\frac{\delta}{M}\right)^{p+1}.
$$
Since the latter estimate holds uniformly for any $k=1,2,3,\ldots$, this implies $\int_{0}^{\infty} |f|^p dt$ diverges, a contradiction.
\end{proof}

\begin{lemma}\label{lem:Lpbis}
Let $h\in  L^q ([0,\infty))$, where $q\ge1$, and let $h$ have the bounded derivatives $h', h''$. Then
$$
\lim_{t\to\infty}h(t)= \lim_{t\to\infty}h'(t)=0.
$$
\end{lemma}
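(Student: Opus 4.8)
The plan is to separate the two assertions. The limit $\lim_{t\to\infty}h(t)=0$ is immediate: since $h\in L^q([0,\infty))$ with $q\ge1$ and $h'$ is bounded, $h$ satisfies exactly the hypotheses of Lemma~\ref{lemma1} ($h\in L^q\cap C^1$, $h'\in L^\infty$), so that lemma gives $h(t)\to0$. It therefore remains only to prove $h'(t)\to0$.

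For the derivative I would argue by contradiction, in the same spirit as the proof of Lemma~\ref{lemma1}, using the boundedness of $h''$ to localise. Put $M=\|h''\|_{L^\infty([0,\infty))}$. If $M=0$ then $h'$ is constant and $h\in L^q$ forces $h'\equiv0$; so assume $M>0$. If $h'(t)\not\to0$, there are $\delta>0$ and $t_k\nearrow\infty$ with $|h'(t_k)|\ge\delta$. Since $|h''|\le M$, for every $t\in[t_k,\,t_k+\tfrac{\delta}{2M}]$ we have $|h'(t)-h'(t_k)|\le M\cdot\tfrac{\delta}{2M}=\tfrac{\delta}{2}$, hence $|h'(t)|\ge\tfrac{\delta}{2}$ on this whole interval and, by continuity, $h'$ keeps a constant sign there. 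Consequently
$$
\Bigl|\,h\bigl(t_k+\tfrac{\delta}{2M}\bigr)-h(t_k)\,\Bigr|=\Bigl|\int_{t_k}^{t_k+\delta/(2M)}h'(s)\,ds\Bigr|\ \ge\ \frac{\delta}{2}\cdot\frac{\delta}{2M}=\frac{\delta^2}{4M},
$$
a bound uniform in $k$. This contradicts the already established fact $h(t)\to0$, which forces $h(t_k+\tfrac{\delta}{2M})-h(t_k)\to0$. Hence $h'(t)\to0$.

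I do not anticipate a genuine obstacle here: the argument is a direct interval-localised Landau–Kolmogorov estimate, and the only delicate points are the degenerate case $M=0$ and the remark that $h'$ does not change sign on the short interval (which is what converts the pointwise lower bound on $|h'|$ into a lower bound on the increment of $h$). An alternative, essentially equivalent route would be the interval form of the Landau inequality, $\sup_{[a,a+L]}|h'|\le\tfrac{2}{L}\sup_{[a,a+L]}|h|+\tfrac{L}{2}\sup_{[a,a+L]}|h''|$; letting $a\to\infty$ with $L$ fixed gives $\limsup_{t\to\infty}|h'(t)|\le\tfrac{LM}{2}$, and then $L\to0^+$ finishes the proof.
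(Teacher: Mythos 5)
Your proof is correct, but for the derivative it takes a genuinely different route from the paper. For $h(t)\to 0$ you and the paper do the same thing: apply Lemma~\ref{lemma1} to $h$ itself with $p=q$. For $h'(t)\to 0$, however, the paper does not argue by contradiction from $h\to 0$; it first proves the quantitative statement $h'\in L^{2q}([0,\infty))$ by writing $|h'|^{2q}=\frac{d}{dt}\bigl(h'^{[2q-1]}h\bigr)-(2q-1)h''h|h'|^{2q-2}$, integrating, and absorbing the error term via H\"older's and Young's inequalities using the boundedness of $h$, $h'$, $h''$; it then applies Lemma~\ref{lemma1} a second time, to $f=h'$ with $p=2q$. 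Your argument --- once $h\to0$ is known, localise on intervals of length $\delta/(2M)$ on which $|h'|\ge\delta/2$ with constant sign, and deduce a uniform lower bound $\delta^2/(4M)$ on the increment of $h$, contradicting $h\to0$ --- is a Barbalat-type argument; it is shorter and more elementary, the constant-sign remark and the degenerate case $M=0$ are exactly the points that need care and you handle both, and the alternative via the interval form of Landau's inequality works equally well. What the paper's heavier computation buys is the extra integrability conclusion $h'\in L^{2q}$, which is precisely what gets iterated in Corollary~\ref{cor:Lpbis} to yield $h^{(k)}\in L^{2^k q}$ in addition to $h^{(k)}\to0$; your method proves the limits asserted in the lemma (and could be iterated for limits of higher derivatives, since it only needs $g\to0$ and $g''$ bounded to conclude $g'\to0$), but it does not produce the $L^{2^k q}$ membership.
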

\begin{proof}
Recall the standard notation:  $x^{[\alpha]}=|x|^{\alpha-1}x$, then ${x^{[\alpha]}}'=\alpha|x|^{\alpha-1}$. We have
	\begin{equation}
	\begin{split}
	\int_{t_0}^{t} |h'|^{2q}\,dt&=
\int_{t_0}^{t}\left( \frac{d}{dt}\bigl(h'^{[2q-1]}h\bigr)- (2q-1)h''h|h'|^{2q-2}\right)dt,\\
&=\left.h'^{[2q-1]}h\right|_{t_0}^{t}-(2q-1)\int_{t_0}^{t} h''h|h'|^{2q-2}dt.
	\end{split}
	\end{equation}
Using the boundedness of $h,h'$ and subsequently Holder's and Young's inequalities gives
\begin{align*}
\int_{0}^{t} |h'|^{2q}\,dt &
\leq C_1+ C_2\biggl(\int_{0}^{t}|h'|^{2q}\,dt\biggr)^\frac{q-1}{q} \biggl(\int_{0}^{t}|h|^q\,dt\biggr)^\frac{1}{q}\\
&\leq C_1+\frac{C_2(q-1)}{q}\epsilon^\frac{1}{q-1}\int_{0}^{t}|h'|^{2q}\,dt +\frac{C_2}{q\epsilon}\int_{0}^{t}|h|^{q}\,dt,
\end{align*}
implying for sufficiently small $\epsilon$ that
$$
\int_{0}^{t} |h'|^{2q}\,dt\le C_3+C_4\int_{0}^{t}|h|^q\,dt.
$$
Since $\|h\|_{L^q([0,\infty))}<\infty$ we obtain 	$\|h'\|_{L^{2q}([0,\infty))}<\infty$, therefore applying Lemma~\ref{lemma1} to $f=h'$ and $p=2q$ we deduce that $ \lim_{t\to \infty}h'(t) = 0$. Furthermore, by the made assumptions, Lemma~\ref{lemma1} is also applicable to $f=h(t)$ and $p=q\ge1$, therefore we have $ \lim_{t\to \infty}h(t) = 0$, the lemma follows.
\end{proof}

Repeating the argument of the previous lemma one easily arrives to

\begin{corollary}\label{cor:Lpbis}
Let $h\in  L^q ([0,\infty))$, where $q\ge1$, and let $h$ have the bounded derivatives $h^{(k)}$ of any order $k\ge1$. Then
$$
h^{(k)}\in  L^{2^kq} ([0,\infty)) \quad \text{and}\quad\lim_{t\to\infty}h^{(k)}(t)=0, \quad\quad\forall k\ge0.
$$
\end{corollary}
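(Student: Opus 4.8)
The plan is a straightforward induction on $k$, using Lemma~\ref{lem:Lpbis} as the inductive engine. The key observation is that the proof of Lemma~\ref{lem:Lpbis} proves more than its statement: it establishes the integral bound $\int_0^t|h'|^{2q}\,dt\le C_3+C_4\int_0^t|h|^q\,dt$, and hence the membership $h'\in L^{2q}([0,\infty))$, whenever $h\in L^q([0,\infty))$ has bounded first and second derivatives. It is precisely this doubling $q\mapsto 2q$ of the integrability exponent that gets iterated.

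For the base case $k=0$ there is nothing to prove for integrability, since $L^{2^0q}=L^q$, and $\lim_{t\to\infty}h(t)=0$ follows at once from Lemma~\ref{lemma1} (applied with $p=q$), using that $h\in L^q$ and $h'\in L^\infty$; note this also shows $h$ is bounded, which legitimizes the hypotheses used below. For the inductive step, suppose $h^{(k)}\in L^{2^kq}([0,\infty))$. I would then apply Lemma~\ref{lem:Lpbis} with $g:=h^{(k)}$ in the role of $h$ and $q':=2^kq\ge1$ in the role of $q$: the hypotheses hold because $g\in L^{q'}$ by the inductive assumption, while $g'=h^{(k+1)}$ and $g''=h^{(k+2)}$ are bounded by the standing assumption that all derivatives of $h$ of order $\ge1$ are bounded. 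The (strengthened) conclusion of Lemma~\ref{lem:Lpbis} then gives $h^{(k+1)}=g'\in L^{2q'}([0,\infty))=L^{2^{k+1}q}([0,\infty))$ together with $\lim_{t\to\infty}h^{(k+1)}(t)=\lim_{t\to\infty}g'(t)=0$. This closes the induction and yields both assertions for every $k\ge0$.

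The argument is essentially bookkeeping, so I do not expect a genuine obstacle. The one point that does require care is to read off from the proof of Lemma~\ref{lem:Lpbis} the \emph{quantitative} conclusion $h'\in L^{2q}$ rather than merely the \emph{qualitative} $\lim_{t\to\infty}h'=0$, since it is the former that can be fed back into the lemma — the limit alone would not suffice to restart the iteration. Everything else amounts to checking that the hypotheses of Lemma~\ref{lem:Lpbis} persist along the induction, which is immediate from the boundedness of all higher derivatives of $h$.
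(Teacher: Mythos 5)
Your proof is correct and is essentially the paper's intended argument: the paper simply says "repeating the argument of the previous lemma," which is exactly your induction using the quantitative bound $\int_0^t|h'|^{2q}\le C_3+C_4\int_0^t|h|^q$ extracted from the proof of Lemma~\ref{lem:Lpbis} (together with the boundedness of $h^{(k)}$, $h^{(k+1)}$, $h^{(k+2)}$ at each step). Your remark that the stated lemma's qualitative conclusion alone would not suffice to restart the iteration, and that $h$ itself is bounded via Lemma~\ref{lemma1}, are exactly the right points of care.
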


\nocite{*}


%

\end{document}